\newtheorem{theorem}{Theorem}
\newtheorem{lem-hand}{Hypothesis}
\newtheorem{definition}{Definition}
\newtheorem{corollary}{Corollary}
\newtheorem{lemma}{Lemma}
\newtheorem{example}{Example}
\newtheorem{remark}{Remark}
\newcommand{\nto}{\mbox{$\;\rightarrow_{\hspace*{-0.3cm}{\small n}}\;$~}}
\newcommand{\ntod}{\mbox{$\;{\to}^{^{\hspace*{-0.3cm}{\small d}}}\;$~}}
\title{\LARGE\sffamily\slshape Limit Theorems in Mallows Distance for Processes with Gibssian Dependence}
\author{\large
	L. Cioletti\footnote{Corresponding author.}
	\\[-0.3cm]
	\footnotesize Departamento de Matem\'atica - UnB
	\\[-0.3cm]
	\footnotesize 70910-900, Bras\'ilia, Brazil
	\\[-0.3cm]
	\footnotesize\texttt{cioletti@mat.unb.br}
	\and
	\large
	C. C. Y. Dorea
	\\[-0.3cm]
	\footnotesize Departamento de Matem\'atica - UnB
	\\[-0.3cm]
	\footnotesize 70910-900, Bras\'ilia, Brazil
	\\[-0.3cm]
	\footnotesize\texttt{c.c.y.dorea@mat.unb.br}
	\and
	\large
	R. Vila 
	\\[-0.3cm]
	\footnotesize Departamentos de Matem\'atica e Estat\'istica - UnB
	\\[-0.3cm]
	\footnotesize 70910-900, Bras\'ilia, Brazil
	\\[-0.3cm]
	\footnotesize\texttt{rovig161@mat.unb.br}
}
\date{}
\begin{document}
    \makeatletter
    \def\blfootnote{\gdef\@thefnmark{}\@footnotetext}
    \let\@fnsymbol\@roman
    \makeatother

\maketitle

\begin{abstract}
In this paper, we explore the connection between convergence in 
distribution and Mallows distance in the context of positively associated random variables. 
Our results extend some known invariance principles for sequences with FKG property. 
Applications for processes with Gibbssian dependence structures are included. 
\end{abstract}

\blfootnote{\textup{2010} \textit{Mathematics Subject Classification}: 
	60B10,
	60F05, 
	60G10,
	60K35.}
\blfootnote{\textit{Keywords}: Mallows Distance; Positive Association; 
Gibbs Measures.}

\section{Introduction}\label{Introduction}

Positive association for a random vector $(X_1,X_2,\cdots,X_n)$ requires that 
\begin{equation}
\mathrm{cov}\big(
g(X_1,\cdots,X_n),		
h(X_1,\cdots,X_n)\big)
\geqslant 0,
\label{associado}
\end{equation}
whenever $g$ and $h$ are two real-valued coordinatewise nondecreasing functions and whenever the covariance exists. 
This dependence structure has been widely used in the studies of reliability theory, see Barlow and Proschan \cite{BP75}. 
The basic concept actually appeared in Harris  \cite{Harris60} in the context of percolation models 
and it was subsequently generalized to a large class of Statistical Mechanics models 
in the seminal work by Fortuin, Kasteleyn and Ginibre \cite{FKG71}; in the Statistical
Mechanics literature this notion was developed independently from reliability
theory, variables are said to satisfy the FKG inequality if they are associated 
(see, e.g., \cite{FKG71,Lebowitz72}). In fact, we say a process
${\pmb X}\equiv \{X_i: i\in\mathbb{Z}\}$ satisfies the FKG property if  
\eqref{associado} holds for any finite subvector $(X_{i_1},X_{i_2},\cdots,X_{i_n})$.
\medskip 

We will make use of the Mallows distance to analyse the asymptotic behavior of positively associated processes. Mallows distance $d_r(F,G)$, also known as Wasserstein or Kantorovich distance, measures the discrepancy betweem two (cumulative) distribution function (d.f.) $F$ and $G$. The upper Fr\`echet bound $H(x,y)=F(x)\wedge G(y)$ illustrates its connection with positive associativity. Let $X\stackrel{d}{=}F$ and $Y\stackrel{d}{=}G$, being $\stackrel{d}{=}$ equality in distribution. Then from the classical Hoeffding's formula we have,
\[
\mathrm{cov}(X,Y)=\int_{\mathbb{R}^2} \big( H(x,y) - F(x)G(y)\big) dxdy.
\]
On the other hand, the representation theorem from Dorea and Ferreira \cite{DF12} allow us to write,
\[
d_r(F,G)= \int_{\mathbb{R}^2} |x-y|^r dH (x,y),\quad\mbox{if}\,\, r\geqslant 1.
\]
Besides an extensive applications to a wide variety of fields, this metric has been successfully used to derive Central Limit Theorem (CLT) type results for heavy-tailed stable distributions (see, e.g., Johnson and Samworth \cite{JS2005} or Dorea and Oliveira \cite{DO14}).  A key property to achieve these results is provided by its close relation to convergence in distribution ($\ntod$), as 
established by Bickel and Freedman \cite{BF81},
\begin{equation}
\vspace*{2mm}
\label{BickelFriedman}
d_r(F_n,G) \nto 0 \; \Leftrightarrow \; F_n \ntod G \;\mbox{and}\; 
\int_{\mathbb{R}}|x|^r dF_n(x) \nto \int_{\mathbb{R}} |x|^r dG (x).
\end{equation}

For stabilized partial sum of positively associated random variables (r.v.'s) 
we will show convergence in Mallows distance and hence the asymptotic normality. 
Theorems \ref{convergencia em Mallows 2} and \ref{teo-conv-dist-mallows-alpha-grande} generalize   
Newman and Wright's \cite{CN81} CLT for stationary processes. 
In a recent preprint \cite{2016arXiv160305322G}, using the Stein's method, 
explicit bounds on the Mallows distance of order $r=1$ is obtained 
under weak stationarity assumptions. Using the same method the authors proved in \cite{EM10} some 
convergence rates in limit theorems of normalized partial sums for certain sequences of dependent, identically distributed
random variables which arise in statistical mechanics.
Under strict stationarity and weakly positive association, 
the authors obtained in \cite{DD88} asymptotic normality and give a bound on the Kolmogorov distance. 
By making use of asymptotic normality we strengthen some of the mentioned results to Mallows $d_r$ convergence. 
As for the non-stationary case, our Theorem \ref{theorem for nonstationary} extends
Cox and Grimmett's \cite{CG84} results. Its proof is conceptually different 
from the Cox and Grimmett's proof  and, in particular, 
we shall mention that the characteristic functions  
does not play a prominent role in our proof. 
\medskip

As application we exhibit the $d_r$ convergence for ferromagnetic Ising type models with
discrete and continuous spins. The results apply to both short and long-range potentials 
and also to non-translation invariant systems. For finite range potentials 
the convergence in the Mallows distance of stabilized sums are obtained for any $r\geq 2$.
To prove similar results for long-range potentials, 
near to the critical temperature seems to be a 
very challenging problem. Here we are able to 
show that the convergence in the Mallows distance still occurs 
but some strong restrictions on the order $r$ 
have to be placed.

\section{Positive Association and Mallows Distance}

Let $\mathbb{Z}$ be the set of integers. We will be considering processes ${\pmb X}\equiv \{X_j: j\in\mathbb{Z}\}$ defined on some probability space 
$(\Omega,\mathscr{F},\mathbb{P})$ and that are positively associated.

\begin{definition}
A process ${\pmb X}$ is said to be positive associated if,
	given two coordinatewise non-decreasing functions 
	$f,g:\mathbb{R}^n\to \mathbb{R}$ 
	and  $j_1,\cdots,j_n\in\mathbb{Z}$, we have
		\[
		\mathrm{cov}\big(
		f(X_{j_1},\ldots, X_{j_n})
		,
		g(X_{j_1},\ldots, X_{j_n})
		\big)
		\geqslant 
		0,
		\]
	provided the covariance exists.
\end{definition}
We say that a function $f:\mathbb{R}^n\to\mathbb{R}$ is non-decreasing if 
$f(x_1,\ldots,x_n)\leqslant f(y_1,\ldots,y_n)$ whenever $x_j\leqslant y_j$ for all $j=1,\ldots, n$. For the sake of notation,
if a different
 probability measure $\mu$ is to be associated with the measurable space $(\Omega,\mathscr{F})$ we shall write 
$\mathrm{cov}_\mu$ and similarly $\mathbb{E}_\mu$ for the expectation. Below we gather few properties needed for our proofs,
see Newman and Wright \cite{CN81} or Oliveira \cite{Oliveira12}.
\begin{lemma}
	\label{lemma-associatedproperties}
Let ${\pmb X}$ be positive associated.
\medskip

(a) For $m_j\ge 1$, if $f_j:\mathbb{R}^{m_j}\to\mathbb{R}$ are coordinatewise non-decreasing functions then
$\{f_j(X_{i_1},\cdots, X_{i_{m_j}}): i_1,\cdots,i_{m_j}\in\mathbb{Z}\}$ is also positive associated.
\medskip

(b) If all $X_j$'s possess finite second moment then the characteristic functions $\displaystyle\phi_j(r_j)=\mathbb{E}(\exp\{i r_jX_j\})$ and 
$\displaystyle\phi(r_1,\cdots,r_n)=\mathbb{E}(\exp\{i\sum_{j=1}^n r_jX_j\})$  satisfy,
\[
\left|\phi(r_1,\cdots,r_n)-\prod_{j=1}^{n}\phi_j(r_j)\right|
\leqslant
{1\over 2}\sum_{1\leqslant j\neq k\leqslant n}|r_jr_k|\mathrm{cov}(X_j,X_k).
\]
\end{lemma}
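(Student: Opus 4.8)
\medskip
\noindent\textit{Sketch of a proof (plan).}

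\noindent\textbf{Part (a).} This is purely order-theoretic, so I would argue directly from the definition. Fix finitely many of the new random variables, say $Y_1,\dots,Y_p$ with $Y_s=f_{j_s}(X_{i^{(s)}_1},\dots,X_{i^{(s)}_{m_{j_s}}})$, and let $\ell_1<\cdots<\ell_q$ enumerate all the indices that occur among them. Given coordinatewise non-decreasing $\Phi,\Psi:\mathbb{R}^p\to\mathbb{R}$, the compositions $\Phi(Y_1,\dots,Y_p)$ and $\Psi(Y_1,\dots,Y_p)$, regarded as functions of $(X_{\ell_1},\dots,X_{\ell_q})$, are again coordinatewise non-decreasing: each $Y_s$ is non-decreasing in the variables on which it depends and constant in the others, and a non-decreasing function of non-decreasing functions is non-decreasing. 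Positive association of $\pmb X$, applied to the finite subvector $(X_{\ell_1},\dots,X_{\ell_q})$, then yields $\mathrm{cov}(\Phi(Y_1,\dots,Y_p),\Psi(Y_1,\dots,Y_p))\ge 0$, which is the assertion.

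\medskip
\noindent\textbf{Part (b).} I would split the argument into a bivariate estimate, a telescoping reduction, and an induction. \emph{Bivariate estimate.} First I would prove that if $X,Y$ are associated with finite second moments then $|\,\mathbb{E}\,e^{i(sX+tY)}-\mathbb{E}\,e^{isX}\,\mathbb{E}\,e^{itY}\,|\le|st|\,\mathrm{cov}(X,Y)$ for all $s,t\in\mathbb{R}$. Let $(X',Y')$ be an independent copy of $(X,Y)$; for finite-variance $U=\varphi(X,Y)$, $V=\psi(X,Y)$ one has the decoupling identity $\mathrm{cov}(U,V)=\tfrac12\,\mathbb{E}[(U-U')(V-V')]$. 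Applying it with $\varphi(x,y)=e^{isx}$, $\psi(x,y)=e^{ity}$, writing $e^{isX}-e^{isX'}=is\int_{\mathbb{R}}e^{isa}(\mathbf{1}(X>a)-\mathbf{1}(X'>a))\,da$ and similarly for $Y$, using Fubini (legitimate since $\mathbb{E}[|X-X'|\,|Y-Y'|]<\infty$) and the decoupling identity once more, I obtain
\[
\mathbb{E}\,e^{i(sX+tY)}-\mathbb{E}\,e^{isX}\;\mathbb{E}\,e^{itY}=-st\int_{\mathbb{R}^2}e^{i(sa+tb)}\,\mathrm{cov}\big(\mathbf{1}(X>a),\mathbf{1}(Y>b)\big)\,da\,db .
\]
Since $\mathbf{1}(X>a)$ and $\mathbf{1}(Y>b)$ are non-decreasing in $X$ and $Y$, positive association makes the integrand nonnegative; taking absolute values and invoking Hoeffding's formula $\int_{\mathbb{R}^2}\mathrm{cov}(\mathbf{1}(X>a),\mathbf{1}(Y>b))\,da\,db=\mathrm{cov}(X,Y)$ (recalled in the Introduction) closes the step. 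The very same computation yields $\mathrm{cov}(U,e^{itY})=it\int_{\mathbb{R}}e^{itb}\,\mathrm{cov}(U,\mathbf{1}(Y>b))\,db$ for arbitrary finite-variance $U$, hence $|\mathrm{cov}(U,e^{itY})|\le|t|\int_{\mathbb{R}}|\mathrm{cov}(U,\mathbf{1}(Y>b))|\,db$, a remark I use below.

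\medskip
\noindent\emph{Telescoping reduction.} Writing $S_k=\sum_{j=1}^kr_jX_j$ and $\phi_j=\phi_j(r_j)$, a telescoping of the product together with $|\phi_j|\le1$ gives
\[
\Big|\,\phi(r_1,\dots,r_n)-\prod_{j=1}^n\phi_j\,\Big|\ \le\ \sum_{k=2}^n\big|\,\mathrm{cov}\big(e^{ir_kX_k},\,e^{iS_{k-1}}\big)\,\big|,
\]
so it suffices to show $|\mathrm{cov}(e^{ir_kX_k},e^{iS_{k-1}})|\le|r_k|\sum_{j<k}|r_j|\,\mathrm{cov}(X_j,X_k)$ for each $k$; summing over $k$ reproduces $\tfrac12\sum_{j\ne k}|r_jr_k|\,\mathrm{cov}(X_j,X_k)$ exactly, with no double counting.

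\medskip
\noindent\emph{The remaining covariance, and the main obstacle.} When the coefficients $r_1,\dots,r_k$ share one sign — which covers the one-sided normalizations used in the central limit applications of this paper — $S_{k-1}$ is a monotone function of $(X_1,\dots,X_{k-1})$, so by part (a) the pair $(X_k,S_{k-1})$ is associated, and the bivariate estimate with $X=X_k$, $Y=S_{k-1}$, $s=r_k$, $t=1$ gives $|\mathrm{cov}(e^{ir_kX_k},e^{iS_{k-1}})|\le|r_k|\,\mathrm{cov}(X_k,S_{k-1})=|r_k|\sum_{j<k}|r_j|\,\mathrm{cov}(X_j,X_k)$, finishing the proof in this case. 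For arbitrary real $r_j$ the quantity $\mathrm{cov}(\mathbf{1}(X_k>a),\mathbf{1}(S_{k-1}>b))$ need no longer be nonnegative, so this cancellation is lost and one must keep all the factors of $e^{iS_{k-1}}=\prod_{j<k}e^{ir_jX_j}$: using the remark from the bivariate step, $|\mathrm{cov}(e^{ir_kX_k},\prod_{j<k}e^{ir_jX_j})|\le|r_k|\int_{\mathbb{R}}|\mathrm{cov}(\mathbf{1}(X_k>a),\prod_{j<k}e^{ir_jX_j})|\,da$, and then one estimates the latter integral by induction on the number of factors, peeling them off via the identity $\mathrm{cov}(A,PQ)=\mathrm{cov}\big((A-\mathbb{E} A)Q,\,P\big)+\mathbb{E}[P]\,\mathrm{cov}(A,Q)$, the centering by $\mathbb{E} A$ being exactly what keeps the resulting one-dimensional integrals convergent, so that everything is eventually dominated by the nonnegative quantities $\int\!\int\mathrm{cov}(\mathbf{1}(X_j>a),\mathbf{1}(X_k>b))\,da\,db=\mathrm{cov}(X_j,X_k)$. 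I expect this last bookkeeping — propagating the centered-indicator structure through the induction without accumulating spurious cross terms — to be the delicate point; it is the content of Newman and Wright's characteristic-function estimate \cite{CN81} (see also \cite{Oliveira12}).
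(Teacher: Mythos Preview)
The paper does not actually prove this lemma: immediately before the statement it says ``Below we gather few properties needed for our proofs, see Newman and Wright \cite{CN81} or Oliveira \cite{Oliveira12}'', and no argument is supplied. So there is nothing in the paper to compare your sketch against beyond those citations.

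That said, what you have written is essentially the argument contained in those references. Part~(a) is the standard composition argument and is complete as stated. For part~(b), your bivariate estimate via the Hoeffding representation and the telescoping
\[
\phi(r_1,\dots,r_n)-\prod_{j=1}^n\phi_j
=\sum_{k=2}^n\Big(\prod_{j>k}\phi_j\Big)\,\mathrm{cov}\big(e^{ir_kX_k},e^{iS_{k-1}}\big)
\]
are both correct, and summing your target bound $|r_k|\sum_{j<k}|r_j|\,\mathrm{cov}(X_j,X_k)$ over $k$ does give exactly $\tfrac12\sum_{j\neq k}|r_jr_k|\,\mathrm{cov}(X_j,X_k)$. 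You are also right that the equal-sign case is immediate from the bivariate estimate, and that the mixed-sign case is the only genuinely delicate point. Newman's way of handling it is slightly cleaner than the peeling identity you propose: he proves once and for all that for $C^1$ functions with bounded derivatives and associated $(X_1,\dots,X_n)$,
\[
\big|\mathrm{cov}\big(f(X_k),\,g(X_1,\dots,X_{k-1})\big)\big|
\le \|f'\|_\infty\sum_{j<k}\|\partial_j g\|_\infty\,\mathrm{cov}(X_j,X_k),
\]
via the multivariate Hoeffding identity $\mathrm{cov}(f(X),g(Y))=\int f'(a)\,\partial_j g(b)\,H_j(a,b)\,da\,db$ with $H_j\ge 0$; applying this with $f(x)=e^{ir_kx}$ and $g=\prod_{j<k}e^{ir_jx_j}$ gives the required bound directly, without the inductive bookkeeping you anticipate. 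Either route works; your outline is sound.
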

\begin{definition}(Mallows  \cite{CLM72})
	For $r>0$, the Mallows $r$-distance between d.f.'s
	$F$ and $G$ is given by
\begin{equation}\label{Mallowsdistance}
	d_{r}(F,G)
	=
	\inf_{(X,Y)}
	\big\{\mathbb{E}(|X-Y|^{r})\big\}^{1/{r}},
	\quad
	X\stackrel{d}{=}F,\; Y\stackrel{d}{=}G
\end{equation}
where the infimum is taken over all random vectors $(X,Y)$ with marginal distributions $F$ and $G$, respectively.
\end{definition}

For $r\ge 1$ the Mallows distance represents a metric on the space of d.f.'s and bears a close connection with weak convergence given by  \eqref{BickelFriedman}. Let 
\[
\mathcal{L}_{r} 
=
\big\{
F:\ F\ \text{a d.f.}\,\,, 
\int_{\mathbb{R}} |x|^{r}\,  dF(x)<+\infty
\big\}.
\]
\begin{theorem}(Bickel and Freedman  \cite{BF81})
\label{BickelFriedmantheorem}
Let $r\geqslant 1$ and let the d.f.'s $G$ and $\{F_n\}_{n\geq 1}$ in $\mathcal{L}_{r}$. Then $d_{r}(F_n,F)\nto 0$ if and only if \eqref{BickelFriedman} holds or equivalently, for every bounded continuous function $g:\mathbb{R}\to \mathbb{R}$ we have,
\[
\displaystyle \int_{\mathbb{R}} g(x)\, dF_n(x)
		\nto
		\int_{\mathbb{R}}g(x)\, dG(x) \quad\mbox{and}\quad
\displaystyle\int_{\mathbb{R}}|x|^{r}\, dF_n(x)\nto
\int_{\mathbb{R}}|x|^{r}\, dG(x).
\]
\end{theorem}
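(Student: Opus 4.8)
The first observation is that the two conditions appearing after ``if and only if'' in the statement say the same thing: ``$F_n \ntod G$'' means, equivalently by the Portmanteau theorem, that $\int_{\mathbb{R}} g\,dF_n \to \int_{\mathbb{R}} g\,dG$ for every bounded continuous $g$, so what really has to be proved is the equivalence of $d_r(F_n,G)\nto 0$ with the conjunction of $F_n\ntod G$ and $\int_{\mathbb{R}} |x|^r\,dF_n \to \int_{\mathbb{R}} |x|^r\,dG$. The plan is to establish the two implications separately, both through a single device, namely the monotone (quantile) coupling $\big(F^{-1}(U),G^{-1}(U)\big)$, where $U$ is uniform on $(0,1)$ and $F^{-1}(u)=\inf\{x:F(x)\geqslant u\}$ denotes the left-continuous generalized inverse.

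The ``only if'' direction is the soft one. Since $r\geqslant 1$, Jensen's inequality gives $\mathbb{E}|X-Y|\leqslant (\mathbb{E}|X-Y|^r)^{1/r}$ for every coupling $(X,Y)$, hence $d_1(F_n,G)\leqslant d_r(F_n,G)$; so for a bounded Lipschitz function $g$ with constant $L$ one has $|\int g\,dF_n-\int g\,dG|\leqslant L\,d_1(F_n,G)\leqslant L\,d_r(F_n,G)\nto 0$, and since the bounded Lipschitz functions are convergence-determining this yields $F_n\ntod G$. For the moments I would invoke the reverse triangle inequality for the $L^r$-norm: along any coupling $(X_n,Y)$ we have $\big|(\mathbb{E}|X_n|^r)^{1/r}-(\mathbb{E}|Y|^r)^{1/r}\big|\leqslant(\mathbb{E}|X_n-Y|^r)^{1/r}$, and passing to the infimum over couplings gives $\big|(\int|x|^r\,dF_n)^{1/r}-(\int|x|^r\,dG)^{1/r}\big|\leqslant d_r(F_n,G)\nto 0$, which is the asserted moment convergence.

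The ``if'' direction carries the real content. Realize $\xi_n=F_n^{-1}(U)$ and $\xi=G^{-1}(U)$ on a common probability space, so that $\xi_n\stackrel{d}{=}F_n$ and $\xi\stackrel{d}{=}G$; the classical fact that weak convergence of distribution functions forces pointwise convergence of the associated quantile functions at every continuity point of $G^{-1}$, hence at all but countably many $u\in(0,1)$, shows that $\xi_n\to\xi$ almost surely, while by hypothesis $\mathbb{E}|\xi_n|^r=\int_{\mathbb{R}}|x|^r\,dF_n\to\int_{\mathbb{R}}|x|^r\,dG=\mathbb{E}|\xi|^r<\infty$. The crux is the elementary but indispensable fact that almost sure convergence together with convergence of the $r$-th absolute moments forces convergence in $L^r$; I would get this by applying Fatou's lemma to the nonnegative variables $2^{r-1}(|\xi_n|^r+|\xi|^r)-|\xi_n-\xi|^r$, which tend almost surely to $2^r|\xi|^r$, concluding $\mathbb{E}|\xi_n-\xi|^r\to 0$. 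Since $(\xi_n,\xi)$ is one admissible coupling of $F_n$ and $G$, the very definition of $d_r$ then gives $d_r(F_n,G)^r\leqslant\mathbb{E}|\xi_n-\xi|^r\nto 0$ (the representation of Dorea and Ferreira recalled above shows this is in fact an equality, but only the inequality is needed), which completes the proof. I expect the main obstacle to be precisely this last upgrade, equivalently the uniform integrability of the family $\{|\xi_n|^r\}_{n\geqslant 1}$, since everything else is either routine bookkeeping about couplings or a standard weak-convergence fact.
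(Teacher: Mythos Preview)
Your argument is correct in both directions and is essentially the standard proof of this result: the quantile coupling $(F_n^{-1}(U),G^{-1}(U))$, the Portmanteau reduction to bounded Lipschitz test functions, the reverse triangle inequality for the $L^r$ norm, and the Scheff\'e--Fatou step to upgrade a.s.\ convergence plus moment convergence to $L^r$ convergence are all sound and used in the right places.

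There is, however, nothing to compare against: the paper does not prove this theorem. It is stated as a quoted result with attribution to Bickel and Freedman \cite{BF81} and is invoked later as a black box (together with the representation theorem of Dorea and Ferreira) in the proofs of Theorems~\ref{convergencia em Mallows 2}, \ref{teo-conv-dist-mallows-alpha-grande} and \ref{theorem for nonstationary}. So your write-up supplies a self-contained proof where the paper simply cites the literature; it matches the classical argument that one finds in \cite{BF81}.
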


Assume $X\stackrel{d}{=}F$, $Y\stackrel{d}{=}G$ and $(X,Y)\stackrel{d}{=}H$, where $H(x,y)=F(x)\wedge G(y)$. Then the following representation result will
be helpful to evaluate $d_r (F,G)$.
\begin{theorem} (Dorea and Ferreira  \cite{DF12}) 
\label{teorema de representacao Mallows}
For $r\geqslant 1$ we have
\begin{eqnarray*}
	d_r^r(F,G)
	=
	\mathbb{E}\big\{|F^{-1}(U)-G^{-1}(U)|^r\big\}
	=
	\int_{0}^{1}
	|F^{-1}(u)-G^{-1}(u)|^r du
	\\[0,2cm]
	=
	\mathbb{E}_{H}\big\{|X-Y|^r\big\}
	= 
	\int_{\mathbb{R}^2} |x-y|^r dH(x,y),
	\end{eqnarray*}
	where $U$ is uniformly distributed on the interval $(0,1)$ and
	\[
	F^{-1}(u)= \inf\{x\in\mathbb{R}:F(x)\geqslant u\},
	\quad
	0 < u < 1,
	\]
	denote the generalized inverse.
\end{theorem}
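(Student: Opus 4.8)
The plan is to identify the comonotone coupling $\big(F^{-1}(U),G^{-1}(U)\big)$ as the minimiser in \eqref{Mallowsdistance} and to recognise the four displayed expressions as one quantity written in four ways. I would begin with the Galois-type identity for the generalised inverse: for every $u\in(0,1)$ and $x\in\mathbb{R}$ one has $F^{-1}(u)\leqslant x$ if and only if $u\leqslant F(x)$, and similarly for $G$. Hence $F^{-1}(U)\stackrel{d}{=}F$, $G^{-1}(U)\stackrel{d}{=}G$, and
\[
\mathbb{P}\big(F^{-1}(U)\leqslant x,\ G^{-1}(U)\leqslant y\big)=\mathbb{P}\big(U\leqslant F(x)\wedge G(y)\big)=F(x)\wedge G(y)=H(x,y),
\]
so $\big(F^{-1}(U),G^{-1}(U)\big)$ has joint d.f. $H$. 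This immediately gives $\mathbb{E}\{|F^{-1}(U)-G^{-1}(U)|^r\}=\mathbb{E}_H\{|X-Y|^r\}=\int_{\mathbb{R}^2}|x-y|^r\,dH(x,y)$, and the same number equals $\int_0^1|F^{-1}(u)-G^{-1}(u)|^r\,du$ by the very definition of the expectation of a function of the uniform variable $U$; all of these are finite because $F,G\in\mathcal{L}_r$ and $|x-y|^r\leqslant2^{r-1}(|x|^r+|y|^r)$. Thus the theorem reduces to the single assertion that the comonotone coupling is optimal, i.e. $d_r^r(F,G)=\int_0^1|F^{-1}(u)-G^{-1}(u)|^r\,du$.

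Since $\big(F^{-1}(U),G^{-1}(U)\big)$ is admissible in \eqref{Mallowsdistance}, the inequality ``$\leqslant$'' is immediate, and the content is the reverse one: every coupling $(X,Y)$ with $X\stackrel{d}{=}F$, $Y\stackrel{d}{=}G$ and joint d.f. $K$ must satisfy $\mathbb{E}|X-Y|^r\geqslant\mathbb{E}_H|X-Y|^r$. For $r>1$ I would use the elementary pointwise identity
\[
|x-y|^r=r(r-1)\iint_{s<t}(t-s)^{r-2}\big[\mathbf{1}(x\leqslant s)\,\mathbf{1}(t\leqslant y)+\mathbf{1}(y\leqslant s)\,\mathbf{1}(t\leqslant x)\big]\,ds\,dt,
\]
take expectations, apply Tonelli (everything is nonnegative) to obtain
\[
\mathbb{E}_K|X-Y|^r=r(r-1)\iint_{s<t}(t-s)^{r-2}\big[\mathbb{P}_K(X\leqslant s,\,Y\geqslant t)+\mathbb{P}_K(Y\leqslant s,\,X\geqslant t)\big]\,ds\,dt,
\]
and then invoke the inclusion--exclusion estimate $\mathbb{P}_K(X\leqslant s,\,Y\geqslant t)\geqslant\big(F(s)-G(t^{-})\big)_{+}=\mathbb{P}_H(X\leqslant s,\,Y\geqslant t)$, valid for all $s<t$, together with its mirror image, to get $\mathbb{E}_K|X-Y|^r\geqslant\mathbb{E}_H|X-Y|^r$ integrand by integrand; passing to the infimum over $K$ closes this case.

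For $r=1$ the weight $(t-s)^{r-2}$ is not locally integrable, so there I would argue directly: $\mathbb{E}_K|X-Y|=\int_{\mathbb{R}}\big[F(t)+G(t)-2K(t,t)\big]\,dt\geqslant\int_{\mathbb{R}}|F(t)-G(t)|\,dt$ by the Fr\'echet bound $K(t,t)\leqslant F(t)\wedge G(t)$, with equality when $K=H$, and a short layer-cake computation identifies $\int_{\mathbb{R}}|F(t)-G(t)|\,dt$ with $\int_0^1|F^{-1}(u)-G^{-1}(u)|\,du$. Alternatively --- and this covers all $r\geqslant1$ at once --- one may quote the Tchen / Cambanis--Simons--Stout theorem on supermodular functionals, since $-|x-y|^r$ is supermodular for every $r\geqslant1$ and the Fr\'echet upper bound $K\leqslant H$ holds for all couplings.

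The heart of the matter is this optimality step, i.e. controlling the infimum over the infinite-dimensional family of couplings; the representation-plus-bound device avoids a direct variational computation, but one must still be careful that the interchange of integrals and the comparison stay within the finite regime (precisely where $F,G\in\mathcal{L}_r$ is used) and handle $r=1$ by the separate route above. The remaining verifications --- the Galois identity, the evaluation $\mathbb{E}_H|X-Y|^r=\int_0^1|F^{-1}(u)-G^{-1}(u)|^r\,du$, and the behaviour of $F^{-1},G^{-1}$ at atoms and flat stretches of $F$ and $G$, which perturbs only Lebesgue-null sets of $u$ --- are routine.
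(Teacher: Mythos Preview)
The paper does not actually prove this theorem: it is stated with attribution to Dorea and Ferreira \cite{DF12} and then used as a black box in the proofs of Theorems~\ref{convergencia em Mallows 2} and~\ref{theorem for nonstationary}. So there is no in-paper proof to compare your proposal against.

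That said, your argument is sound and is essentially the standard route to this representation. The identification of $(F^{-1}(U),G^{-1}(U))$ with the Fr\'echet upper bound $H$ via the Galois equivalence $F^{-1}(u)\leqslant x \Leftrightarrow u\leqslant F(x)$ is correct and immediately gives the equality of the four displayed expressions. For the optimality step, your layer-cake identity for $|x-y|^r$ with $r>1$ is valid, the inclusion--exclusion lower bound $\mathbb{P}_K(X\leqslant s,\,Y\geqslant t)\geqslant(F(s)-G(t^-))_+$ is exactly what is needed pointwise, and Tonelli is legitimate by nonnegativity. Your separate $r=1$ argument via $\mathbb{E}_K|X-Y|=\int\big(F(t)+G(t)-2K(t,t)\big)\,dt$ and the Fr\'echet bound is also correct, as is the remark that Cambanis--Simons--Stout handles all $r\geqslant 1$ uniformly since $-|x-y|^r$ is supermodular. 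One minor point: you should state explicitly that the infimum in \eqref{Mallowsdistance} is attained by the comonotone coupling, which your argument does show; this is exactly how the paper exploits the theorem later (producing a concrete $Z^*$ realizing $d_2^2$).
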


\section{Asymptotics for Positive Associated and Stationary Sequences}
\label{The Mallows Distance}

Let ${\pmb X}\equiv \{X_j: j\in\mathbb{Z}\}$ be a stationary sequence in the sense that for all $m\geqslant 1$ and $ l\in \mathbb{Z}$,
\[ 
(X_{i_1},\cdots, X_{i_m})\stackrel{d}{=}(X_{i_1+l},\cdots, X_{i_m+l}).
\]
For stochastic process ${\pmb X}$ it is natural, when dealing with limit theorems, to consider blocks of $n$ consecutive variables,
\[
S_n=\sum_{j=1}^n X_j\quad \mbox{and} \quad S_{[k,k+n)}=\sum_{j=k}^{k+n-1} X_j.
\]
Clearly, under stationary assumption we have $S_{[k,k+n)}\stackrel{d}{=}S_n$ for all $k\in\mathbb{Z}$. Our first result follows from  Newman's CLT:

\begin{theorem}(Newman \cite{CN80})
\label{theoremCLT-Newman-FKG} 
Let ${\pmb X}$ be a stationary and positive associated process. Assume that the variance is finite and strictly positive,
$0<\mathrm{var}X_1 <+\infty$, and that 
\begin{equation}
\sigma^2 \equiv  \mathrm{var}(X_1) + 2 \sum_{j\geqslant 2} \mathrm{cov}(X_1,X_j) < +\infty.
\label{variance}
\end{equation}
Then
\begin{eqnarray}\label{eq-clt-newman}
\frac{S_{[k,k+n)}-n\mathbb{E}(X_1) }{\sqrt{n}\sigma}
\ntod
\mathrm{N}(0,1), \quad \forall\,k\in\mathbb{Z}.
\end{eqnarray}
\end{theorem}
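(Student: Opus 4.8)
\smallskip
\noindent\emph{Proof strategy.} The plan is to show pointwise convergence of characteristic functions to that of $\mathrm{N}(0,1)$ and conclude by L\'evy's continuity theorem, the crucial point being that Newman's inequality (Lemma~\ref{lemma-associatedproperties}(b)) must be applied not to the $X_j$ themselves but to \emph{block sums}, so that the resulting error term collapses into a controllable difference of variances. Without loss of generality assume $\mathbb{E}(X_1)=0$, and by stationarity fix $k=0$, so that it suffices to treat $W_n:=S_n/(\sigma\sqrt{n})$; note $\sigma^2\ge \mathrm{var}(X_1)>0$ since all covariances in \eqref{variance} are nonnegative, so the normalization is legitimate. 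A preliminary observation, obtained from stationarity and dominated convergence using \eqref{variance}, is that $\mathrm{var}(S_n)/n\to\sigma^2$; consequently, for each fixed block length $p\ge 1$ the quantity $\sigma_p^2:=\mathrm{var}(S_p)/p$ satisfies $0<\sigma_p^2\le\sigma^2$ and $\sigma_p^2\to\sigma^2$ as $p\to\infty$ (indeed $\sigma_p^2$ is nondecreasing in $p$).

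First I would fix $p$, write $n=kp+r_n$ with $0\le r_n<p$, and decompose $S_n=\sum_{i=1}^{k}Y_i^{(p)}+R_n$, where $Y_i^{(p)}:=S_{[(i-1)p+1,\,ip+1)}$ and $R_n:=S_{[kp+1,\,kp+1+r_n)}$ is the remainder block of at most $p$ terms. By Lemma~\ref{lemma-associatedproperties}(a) the family $\{Y_i^{(p)}\}_{i=1}^{k}$ is again positively associated (partial sums are coordinatewise nondecreasing functions), each $Y_i^{(p)}\stackrel{d}{=}S_p$ by stationarity, and they all have finite second moment. The remainder is asymptotically negligible at the level of characteristic functions: since $\mathbb{E}(R_n)=0$ and $\mathrm{var}(R_n)\le\mathrm{var}(S_p)$ is bounded in $n$, using $|e^{ix}-1|\le|x|$ one gets $\bigl|\phi_{W_n}(t)-\mathbb{E}\exp\{it\sum_i Y_i^{(p)}/(\sigma\sqrt{n})\}\bigr|\le \frac{|t|}{\sigma\sqrt{n}}\sqrt{\mathrm{var}(R_n)}\to 0$.

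Next I would apply Newman's inequality to $Y_1^{(p)},\dots,Y_k^{(p)}$ with all frequencies equal to $t/(\sigma\sqrt{n})$. This bounds $\bigl|\mathbb{E}\exp\{it\sum_i Y_i^{(p)}/(\sigma\sqrt{n})\}-\phi_{S_p}(t/(\sigma\sqrt{n}))^{k}\bigr|$ by $\tfrac{t^2}{2\sigma^2 n}\sum_{1\le i\ne j\le k}\mathrm{cov}\bigl(Y_i^{(p)},Y_j^{(p)}\bigr)=\tfrac{t^2}{2\sigma^2 n}\bigl(\mathrm{var}(S_{kp})-k\,\mathrm{var}(S_p)\bigr)$, where the identity uses $\sum_i Y_i^{(p)}\stackrel{d}{=}S_{kp}$ and $\mathrm{var}(\sum_i Y_i^{(p)})=\sum_i\mathrm{var}(Y_i^{(p)})+\sum_{i\ne j}\mathrm{cov}(Y_i^{(p)},Y_j^{(p)})$ — here is exactly where blocking pays off: the cross‑covariance sum is now a difference of variances rather than an uncontrolled $O(n)$ quantity. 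Since $kp/n\to 1$ and $k/n\to 1/p$, letting $n\to\infty$ with $p$ fixed this bound tends to $\tfrac{t^2}{2\sigma^2}(\sigma^2-\sigma_p^2)$. Meanwhile, the second‑order Taylor expansion $\phi_{S_p}(s)=1-\tfrac{s^2}{2}\mathrm{var}(S_p)+o(s^2)$ as $s\to0$ (valid since $\mathbb{E}(S_p)=0$ and $\mathbb{E}(S_p^2)<\infty$), evaluated at $s=t/(\sigma\sqrt{n})$ and raised to the power $k$, gives $\phi_{S_p}(t/(\sigma\sqrt{n}))^{k}\to\exp\bigl(-t^2\sigma_p^2/(2\sigma^2)\bigr)$ as $n\to\infty$.

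Combining the three estimates yields $\limsup_{n\to\infty}\bigl|\phi_{W_n}(t)-\exp(-t^2\sigma_p^2/(2\sigma^2))\bigr|\le \tfrac{t^2}{2\sigma^2}(\sigma^2-\sigma_p^2)$ for every fixed $p$. Finally I would let $p\to\infty$: since $\sigma_p^2\to\sigma^2$, the right‑hand side vanishes and $\exp(-t^2\sigma_p^2/(2\sigma^2))\to e^{-t^2/2}$, whence $\phi_{W_n}(t)\to e^{-t^2/2}$ for every $t\in\mathbb{R}$, and L\'evy's continuity theorem gives \eqref{eq-clt-newman}. I expect the main obstacle to be the clean bookkeeping of the iterated limit (first $n\to\infty$ with $p$ fixed, then $p\to\infty$), in particular establishing $\mathrm{var}(S_n)/n\to\sigma^2$ and $\sigma_p^2\uparrow\sigma^2$ from \eqref{variance} and controlling the remainder block $R_n$; no single step requires a delicate estimate — the one genuinely essential idea is to deploy Newman's inequality at the block scale so that the error term telescopes.
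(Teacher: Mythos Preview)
The paper does not prove Theorem~\ref{theoremCLT-Newman-FKG}; it is stated with attribution to Newman~\cite{CN80} and used as a black box in the proof of Theorem~\ref{convergencia em Mallows 2}. So there is no in-paper proof to compare against.

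That said, your argument is correct and is essentially Newman's original proof: block at a fixed scale $p$, apply the characteristic-function inequality of Lemma~\ref{lemma-associatedproperties}(b) to the block sums, then take the iterated limit $n\to\infty$ followed by $p\to\infty$. All the pieces check out: the remainder-block estimate via $|e^{ix}-1|\le|x|$ and Cauchy--Schwarz, the identification of the cross-covariance sum as $\mathrm{var}(S_{kp})-k\,\mathrm{var}(S_p)$, the second-order Taylor expansion of $\phi_{S_p}$, and the monotone convergence $\sigma_p^2\uparrow\sigma^2$ (which follows from the explicit formula $\sigma_p^2=\mathrm{var}(X_1)+2\sum_{l=1}^{p-1}(1-l/p)\,\mathrm{cov}(X_1,X_{l+1})$ together with nonnegativity of the covariances and \eqref{variance}). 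One purely cosmetic point: you reuse the symbol $k$ both for the index in $S_{[k,k+n)}$ and for the number of blocks $\lfloor n/p\rfloor$; since you fixed $k=0$ at the outset there is no actual clash, but renaming the block count would improve readability.
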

It is worth mentioning that the positive associativity and stationarity assures that
\[
\sigma^2 
= 
\chi
\equiv 
\sup_{k\in\mathbb{Z}}
\sum_{j\in \mathbb{Z} } \mathrm{cov}(X_k,X_j)
\]
is well-defined and the latter is known as the susceptibility associated to ${\pmb X}$.
\medskip

Define
\begin{equation}
V_{[k,k+n)}=\frac{S_{[k,k+n)}-n\mathbb{E}(X_1)}{\sqrt{n}\sigma}\stackrel{d}{=}F_{[k,k+n)}
\label{notation V and F}
\end{equation}
and let $\Phi$ be the d.f. of $\mathrm{N}(0,1)$.

\begin{theorem}
\label{convergencia em Mallows 2}
Under the hypotheses of Theorem \ref{theoremCLT-Newman-FKG} we have for $0 < r \leqslant 2$
		\[
		\lim_{n\to\infty} d_r(F_{[k,k+n)},\Phi) = 0.
		\]
\end{theorem}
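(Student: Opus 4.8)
The plan is to reduce the claim to the Bickel--Freedman criterion (Theorem \ref{BickelFriedmantheorem}). Since the range $0 < r \leqslant 2$ is covered by the exponent $2$ in the sense that $L_2 \subseteq L_r$ and $d_r \leqslant d_2$ (by Jensen/Lyapunov applied to the infimum in \eqref{Mallowsdistance}), it suffices to prove the statement for $r = 2$; the cases $r < 2$ then follow immediately from $d_r(F_{[k,k+n)},\Phi) \leqslant d_2(F_{[k,k+n)},\Phi)$. So I would first record this monotonicity remark, and for the rest of the argument fix $r = 2$.

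With $r = 2$ in hand, by Theorem \ref{BickelFriedmantheorem} the convergence $d_2(F_{[k,k+n)},\Phi) \to 0$ is equivalent to the conjunction of (i) $F_{[k,k+n)} \ntod \mathrm{N}(0,1)$ and (ii) $\int_{\mathbb{R}} x^2\, dF_{[k,k+n)}(x) \to \int_{\mathbb{R}} x^2\, d\Phi(x) = 1$. Statement (i) is exactly Newman's CLT, Theorem \ref{theoremCLT-Newman-FKG}, which holds under the stated hypotheses. For (ii), note that $V_{[k,k+n)}$ is the centered variable $S_{[k,k+n)} - n\mathbb{E}(X_1)$ divided by $\sqrt{n}\,\sigma$, so
\[
\int_{\mathbb{R}} x^2\, dF_{[k,k+n)}(x)
= \mathbb{E}\big(V_{[k,k+n)}^2\big)
= \frac{\mathrm{var}(S_{[k,k+n)})}{n\sigma^2}
= \frac{1}{n\sigma^2}\sum_{i,j=k}^{k+n-1}\mathrm{cov}(X_i,X_j).
\]
Using stationarity to rewrite the double sum by the lag $\ell = j - i$, one gets $\frac{1}{\sigma^2}\big(\mathrm{var}(X_1) + 2\sum_{\ell=1}^{n-1}(1 - \ell/n)\,\mathrm{cov}(X_1,X_{1+\ell})\big)$. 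Here positivity of the covariances (a consequence of positive association) makes every term nonnegative, so the tail $\sum_{\ell \geqslant n}$ and the weights $\ell/n$ can be controlled by dominated convergence against the summable sequence $\{\mathrm{cov}(X_1,X_{1+\ell})\}$ guaranteed by \eqref{variance}; the limit of the bracketed quantity is exactly $\sigma^2$, so the ratio tends to $1$. This establishes (ii).

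Combining (i) and (ii) with Theorem \ref{BickelFriedmantheorem} gives $d_2(F_{[k,k+n)},\Phi) \to 0$, and hence $d_r(F_{[k,k+n)},\Phi) \to 0$ for every $0 < r \leqslant 2$, uniformly in the sense that the limit holds for each fixed $k \in \mathbb{Z}$ (indeed the expression above does not even depend on $k$ by stationarity). The only place requiring genuine care is the second-moment convergence (ii): one must verify that the variables $V_{[k,k+n)}$ are uniformly square-integrable enough for the variance computation to be legitimate and that the Cesàro-type averaging of the covariances converges — but since $0 < \mathrm{var}(X_1) < \infty$ and \eqref{variance} hold, and all covariances are nonnegative by positive association, this is a routine application of dominated convergence rather than a real obstacle. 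I do not expect any step to pose a serious difficulty; the proof is essentially an assembly of Newman's CLT with the Bickel--Freedman equivalence, the point being that for positively associated stationary sequences the variance normalization is the ``correct'' one and automatically forces convergence of second moments.
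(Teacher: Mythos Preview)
Your proposal is correct and follows essentially the same approach as the paper: establish $d_2$-convergence via the Bickel--Freedman equivalence (Newman's CLT for the weak convergence, plus $\mathrm{var}(S_n)/n \to \sigma^2$ for the second moments), then reduce $0<r<2$ to $r=2$ by a Lyapunov-type inequality. The only cosmetic difference is that the paper invokes the representation theorem (Theorem~\ref{teorema de representacao Mallows}) to exhibit an optimal coupling $(V_{[k,k+n)},Z^*)$ with $d_2^2=\mathbb{E}\{(V_{[k,k+n)}-Z^*)^2\}$ and then applies Lyapunov's inequality to that specific pair, whereas you appeal directly to the monotonicity $d_r \leqslant d_2$; your route is slightly more economical but substantively identical.
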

\begin{proof}
First, note that by stationarity we have
\[
\mathrm{var}(S_{[k,k+n)})=\mathrm{var}(S_n)=
n\mathrm{var}(X_1) + 2(n-1) \sum_{j=2}^n \mathrm{cov}(X_1,X_j).
\]
From \eqref{variance} it follows that $\displaystyle\frac{\mathrm{var}(S_{[k,k+n)})}{n}\nto \sigma^2$. Thus
\begin{equation}
\label{convergencevariance}
\mathbb{E}(V_{[k,k+n)}^2)=\mathbb{E}\big\{\big(\frac{S_{[k,k+n)}-n\mathbb{E}(X_1)}{\sqrt{n}\sigma}\big)^2\big\}\nto 1=\mathbb{E}(Z^2),
\end{equation}
where $Z\stackrel{d}{=}\Phi$. Clearly $V_{[k,k+n)}\in\mathcal{L}_2$. Since the convergence \eqref{eq-clt-newman} holds we conclude from Theorem
\ref{BickelFriedmantheorem} that $d_2(F_{[k,k+n)},\Phi) \nto 0$.
\medskip

Next, to extend the convergence for $0<r<2$ we make use of the representation Theorem \ref{teorema de representacao Mallows}. There
exists a r.v. $Z^*\stackrel{d}{=}\Phi$ such that
the joint distribution of $(V_{[k,k+n)},Z^*)$ is given by $H(x,y)=F_{[k,k+n)}(x)\wedge\Phi(y)$ and
\[
d_2^2(F_{[k,k+n)},\Phi)=\mathbb{E}\big\{(V_{[k,k+n)}-Z^*)^2\big\}\nto 0.
\]
By \eqref{Mallowsdistance} and the Liapounov's inequality we have for $0 < r \leqslant 2$
\[
d_r^r(F_{[k,k+n)},\Phi)\leqslant\mathbb{E}\big\{|V_{[k,k+n)}-Z^*|^r\big\}\nto 0.
\]
\end{proof}

To derive convergence for higher order $d_r$, further moment conditions on $X_j$'s will be required.
For $k\in\mathbb{Z}$ let $u_k(\cdot)$ denote the Cox-Grimmet coefficient defined by
\begin{equation}
\label{CoxGrimmet coefficient}
u_k(n)
=
\sum_{j\in \mathbb{Z}: |k-j|\geqslant n} \mathrm{cov}(X_k,X_j),
\quad n\geqslant 0.
\end{equation}
Since we are assuming stationarity we may take $u(n)=u_k(n)=\sum_{j\in \mathbb{Z}: |j|\geqslant n} \mathrm{cov}(X_0,X_j)$.
Note that, by Lemma \ref{lemma-associatedproperties} the process $\{X_j -\mathbb{E}(X_j): j\in\mathbb{Z}\}$ is also stationary and positive associated.
This allow us to state a moment inequality from Birkel  \cite{TB88} adapted for our needs.
\begin{lemma}\label{lemma Birkel}
Let $2<r<r^*$ and let ${\pmb X}$ be a stationary and positive associated process. Assume that $\mathbb{E}\{|X_1|^{r^*}\} <+\infty$ and that for some constants $C_1 > 0$ and $\theta \geqslant \displaystyle\frac{r^*(r-2)}{2 (r^* -r)}$ we have $u(n) \leqslant C_1 n^{-\theta}$. Then there exist a constant 
$C_2=C_2 (r,r^*)$ such that
\begin{equation}
\label{Birkel bound}
	\sup_{k\in \mathbb{Z}}
	\mathbb{E}\big\{|S_{[k,k+n)} - n\mathbb{E}(X_1)|^r\big\}
	\leqslant
	C_2n^{r/2}.
\end{equation}
\end{lemma}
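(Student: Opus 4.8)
The plan is to reduce Birkel's general moment inequality to our stationary, positively associated setting and then invoke it essentially verbatim. First I would observe, as already noted in the excerpt, that the centered process $\{Y_j\} \equiv \{X_j - \mathbb{E}(X_j): j\in\mathbb{Z}\}$ is stationary and positively associated (Lemma~\ref{lemma-associatedproperties}(a) applied to the coordinatewise nondecreasing maps $x \mapsto x - \mathbb{E}(X_j)$), has the same covariance structure as $\pmb X$ (so the same Cox--Grimmett coefficient $u(n)$), satisfies $\mathbb{E}\{|Y_1|^{r^*}\} = \mathbb{E}\{|X_1 - \mathbb{E}(X_1)|^{r^*}\} < +\infty$ by the triangle inequality, and has $\mathbb{E}(Y_j)=0$. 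Under this reduction the quantity to be bounded is $\sup_{k}\mathbb{E}\{|\sum_{j=k}^{k+n-1} Y_j|^r\} = \sup_k \mathbb{E}\{|S^Y_{[k,k+n)}|^r\}$, which is exactly the object Birkel's theorem controls for mean-zero positively associated sequences.

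The core of the argument is then Birkel's theorem itself, which I would state and apply without reproving it: for a mean-zero, stationary, positively associated sequence whose $r^*$-th moments are finite and whose covariance tail decays like $u(n) \leqslant C_1 n^{-\theta}$ with $\theta$ at least the stated threshold $r^*(r-2)/(2(r^*-r))$, one has $\mathbb{E}\{|S^Y_{[k,k+n)}|^r\} \leqslant C_2 n^{r/2}$ with $C_2$ depending only on $r, r^*$ (and the fixed structural constants $C_1$, the moment bound, and $\theta$). Because of stationarity the bound is uniform in $k$, so taking the supremum over $k\in\mathbb{Z}$ is immediate. Substituting back $S^Y_{[k,k+n)} = S_{[k,k+n)} - n\mathbb{E}(X_1)$ yields \eqref{Birkel bound}.

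The only genuine subtlety — and the step I would flag as the main obstacle — is bookkeeping the hypotheses so that Birkel's inequality applies with the \emph{exact} exponent threshold $\theta \geqslant r^*(r-2)/(2(r^*-r))$ quoted here. Birkel's original statement is phrased in terms of a slightly different array of parameters (he typically uses a decay exponent on $u(n)$ together with a moment index, and the threshold appears after an optimization in his proof via a Rosenthal-type maximal inequality for associated sequences). One has to check that our normalization of $u(n)$ as a two-sided sum over $|j|\geqslant n$ matches his one-sided convention up to a harmless constant absorbed into $C_1$, and that the constraint $2 < r < r^*$ together with $\theta$ at the threshold is precisely what his hypothesis demands. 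Once the dictionary between the two parameter sets is verified, there is nothing further to do: the stationarity gives the uniformity in $k$ for free, and the constant $C_2 = C_2(r,r^*)$ is exactly the one produced by Birkel's bound.
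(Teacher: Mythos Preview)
Your proposal is correct and matches the paper's treatment: the paper does not prove this lemma at all but simply states it as a moment inequality from Birkel \cite{TB88} ``adapted for our needs,'' so your reduction to the centered process followed by a direct invocation of Birkel's theorem is exactly the intended argument. The bookkeeping you flag (matching the two-sided Cox--Grimmett coefficient to Birkel's conventions and tracking the exponent threshold) is the only content, and the paper leaves that implicit.
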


Note that, under Theorem \ref{convergencia em Mallows 2}, we have the above conditions satisfied for $r=2$. Indeed, by \eqref{variance} we have $u(n)\leqslant C_1$ and \eqref{Birkel bound} follows from \eqref{convergencevariance}.	

\begin{theorem}
\label{teo-conv-dist-mallows-alpha-grande}
Let $2<r<r^*$ and assume that ${\pmb X}$ satisfies the hypotheses of Lemma \ref{lemma Birkel} with $\theta >\displaystyle\frac{r^*(r-2)}
{2 (r^* -r)}$. Then if $\sigma^2$, given by \eqref{variance}, is such that
$ 0< \sigma^2 < +\infty$ we have
\[
d_r(F_{[k,k+n)},\Phi) \nto 0 \quad \mbox{and} \quad \mathbb{E}(|V_{[k,k+n)}|^r)\nto \mathbb{E}(|Z|^r),
\]
where $F_{[k,k+n)}$ and  $V_{[k,k+n)}$ are defined by \eqref{notation V and F} and $Z\stackrel{d}{=}\Phi\stackrel{d}{=}\mathrm{N}(0,1)$.
\end{theorem}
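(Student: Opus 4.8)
The plan is to derive both conclusions at once from Theorem~\ref{BickelFriedmantheorem}: since the Gaussian law has all moments and we will see below that each $F_{[k,k+n)}$ lies in $\mathcal{L}_r$, the Bickel--Freedman equivalence says that $d_r(F_{[k,k+n)},\Phi)\nto 0$ holds precisely when $F_{[k,k+n)}\ntod\Phi$ together with $\mathbb{E}(|V_{[k,k+n)}|^r)\nto\mathbb{E}(|Z|^r)$. The second of these is exactly the moment statement in the theorem, so it is enough to prove these two facts.

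First I would check that Newman's CLT (Theorem~\ref{theoremCLT-Newman-FKG}) is applicable. From $\mathbb{E}(|X_1|^{r^*})<+\infty$ with $r^*>r>2$ we get $\mathrm{var}(X_1)<+\infty$; and $\mathrm{var}(X_1)$ cannot be $0$, for then stationarity would force every $X_j$ to equal the same a.s.\ constant and $\sigma^2$ would vanish, against the hypothesis $\sigma^2>0$. As $\sigma^2<+\infty$ is assumed, \eqref{variance} holds and Theorem~\ref{theoremCLT-Newman-FKG} gives $V_{[k,k+n)}\ntod Z\stackrel{d}{=}\mathrm{N}(0,1)$ for every $k\in\mathbb{Z}$; in particular $\int g\,dF_{[k,k+n)}\nto\int g\,d\Phi$ for every bounded continuous $g$.

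The core of the proof is the convergence of $r$-th absolute moments, which I would obtain from uniform integrability of the family $\{|V_{[k,k+n)}|^r\}_{n\ge1}$. The map $\rho\mapsto \frac{r^*(\rho-2)}{2(r^*-\rho)}$ is continuous and strictly increasing on $(2,r^*)$, and at $\rho=r$ it equals $\frac{r^*(r-2)}{2(r^*-r)}<\theta$ by hypothesis; hence there is some $r'\in(r,r^*)$ with $\theta\ge\frac{r^*(r'-2)}{2(r^*-r')}$. The remaining hypotheses of Lemma~\ref{lemma Birkel} (namely $\mathbb{E}(|X_1|^{r^*})<+\infty$ and $u(n)\le C_1 n^{-\theta}$) do not involve the exponent, so the lemma applies with $r'$ in place of $r$ and yields a constant $C_2=C_2(r',r^*)$ with $\sup_{k\in\mathbb{Z}}\mathbb{E}(|S_{[k,k+n)}-n\mathbb{E}(X_1)|^{r'})\le C_2 n^{r'/2}$ for all $n$. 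Dividing by $(\sqrt n\,\sigma)^{r'}$ gives $\sup_{n\ge1}\mathbb{E}(|V_{[k,k+n)}|^{r'})\le C_2/\sigma^{r'}<+\infty$, which in particular puts every $F_{[k,k+n)}$ in $\mathcal{L}_r$. Since $r'>r$, this uniform $L^{r'}$-bound makes $\{|V_{[k,k+n)}|^r\}_{n\ge1}$ uniformly integrable, and uniform integrability together with $V_{[k,k+n)}\ntod Z$ yields $\mathbb{E}(|V_{[k,k+n)}|^r)\nto\mathbb{E}(|Z|^r)$. Feeding the two facts into Theorem~\ref{BickelFriedmantheorem} finishes the argument.

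The step I expect to be delicate is the choice of the auxiliary exponent $r'$. Lemma~\ref{lemma Birkel} only delivers the $O(n^{r/2})$ moment bound at the borderline value $r$ under $\theta\ge\frac{r^*(r-2)}{2(r^*-r)}$, and such an endpoint bound yields $L^r$-boundedness but not uniform integrability of the $r$-th powers, which is what convergence of the moments needs. It is exactly the strict inequality $\theta>\frac{r^*(r-2)}{2(r^*-r)}$ in the statement that provides the slack to push the exponent slightly above $r$ while keeping Birkel's inequality valid; the rest is routine.
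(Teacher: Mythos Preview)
Your proposal is correct and follows essentially the same route as the paper: invoke Newman's CLT for the weak convergence, use the strict inequality on $\theta$ together with the monotonicity of $\rho\mapsto r^*(\rho-2)/\big(2(r^*-\rho)\big)$ to pick an auxiliary exponent $r'\in(r,r^*)$ for which Birkel's bound applies, obtain a uniform $L^{r'}$ bound on $V_{[k,k+n)}$, deduce uniform integrability of $|V_{[k,k+n)}|^{r}$ and hence convergence of the $r$-th absolute moments, and conclude via Bickel--Freedman. The only cosmetic difference is that the paper exhibits an explicit choice $r'=2r^*(1+\theta)/(2\theta+r^*)$, which in fact gives $\theta=\psi(r')$ and so uses Lemma~\ref{lemma Birkel} exactly at equality, whereas you argue existence of $r'$ by continuity; both are equally valid.
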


\begin{proof} (i) Since $r^* > 2$, by Theorem \ref{theoremCLT-Newman-FKG} we have $V_{[k,k+n)} \ntod Z$. Next, we show that 
\begin{equation}\label{cvgmoments}
V_{[k,k+n)}\in\mathcal{L}_{r}\quad\mbox{and}\quad \mathbb{E}\{|V_{[k,k+n)}|^r\}\nto \mathbb{E}\{|Z|^r\}.
\end{equation}
Then $d_r(F_{[k,k+n)},\Phi) \nto 0$ follows immediately from \eqref{BickelFriedman}.
\medskip

(ii) For \eqref{cvgmoments} we will show that $\displaystyle\sup_{n\geqslant 1}\sup_{k\in \mathbb{Z}}\mathbb{E}(|V_{[k,k+n)}|^{r'})<+\infty$ for some $r<r'<r^*.$ Thus $\displaystyle V_{[k,k+n)}\in\mathcal{L}_{r'}\subset\mathcal{L}_r$ and the convergence of moments follows from the fact that $\displaystyle\{|V_{[k,k+n)}|^r\}_{n\geqslant 1}$ is uniformly integrable (cf. Billingsley \cite{Billingsley68}, Theorem 5.4).
\medskip

Now let $\psi(r)=\displaystyle\frac{r^*(r-2)}{2 (r^* -r)}$. Then $\psi'(\cdot) >0$ for $r>2$. It follows that there exist $r' > r$ such that $\theta > \psi(r')$. Just take $r' = \displaystyle\frac{2r^*(1+\theta)}{2\theta + r^*}$. From Lemma \ref{lemma Birkel} we have for $C_2 = C_2(r',r^*)> 0$,
 \[
\sup_{k\in \mathbb{Z}}\mathbb{E}\big\{|V_{[k,k+n)}|^{r'}\big\}\leqslant C_2 n^{r'/2}.
\]
It follows that,
 \[
\sup_{k\in \mathbb{Z}}\mathbb{E}\big\{\displaystyle\frac{|V_{[k,k+n)}|^{r'}}{(\sqrt{n}\sigma)^{r'}}\big\}\leqslant C_2 
\displaystyle\frac{n^{r'/2}}{(\sqrt{n}\sigma)^{r'}}=\displaystyle\frac{C_2}{\sigma^{r'}}<+\infty.
\]
\end{proof}

\section{The Non-Stationary Case}
\label{mallows-not-stationary}

When stationarity is relaxed a more refined treatment needs to be carried out. The basic idea is to subdivide the partial sum $S_{[k,k+n)}=\sum_{j=k}^{k+n-1} X_j$ into blocks 
\begin{equation}
\label{blocks}
 S_{[k,k+l_n)},\, S_{[k+l_n,k+2l_n)},\, \ldots,\, S_{[k+(m_n-1)l_n,k+m_nl_n)},\, S_{[k+m_nl_n,k+n)}
\end{equation}
where the first $m_n=\displaystyle[n/l_n]$ 
(the largest integer contained in) blocks
have size $l_n$. 
Note that the last sum in \eqref{blocks}: $S_{[k+m_nl_n,k+n)}$,  have at most 
$n-m_nl_n$ terms, which is non-trivial in case $l_n$ is not a divisor of $n$.
As will be shown, by suitably choosing $l_n$ (see \eqref{choice of ln} ) and by assuming boundness conditions on Cox-Grimmet coefficient \eqref{CoxGrimmet coefficient}, the blocks can be made asymptotically independent. The following arguments suggest the required conditions. Let
\[
\sigma^2_{[k,k+n)} = \mathrm{var}\{S_{[k,k+n)}\}\quad \mbox{and} \quad s_{m_n}^2 = \sum_{j=1}^{m_n}\mathrm{var}\{S_{[k+ (j-1)l_n,k+jl_n)}\}.
\]
From positive associativity we have $\mathrm{cov}(X_r,X_s)\geqslant 0$ and
\begin{align}
\nonumber
\mathrm{cov}\big (S_{[k,k+l_n)},\sum_{1<s\leqslant m_n} S_{[k+(s-1)l_n,k+sl_n)} \big )&=
\sum_{k\leqslant r < k+l_n}\sum_{k+l_n < s \leqslant m_n}\mathrm{cov}\big (X_r,X_s\big )\\
\nonumber
&\leqslant\sum_{k\leqslant r < k+l_n}\sum_{|s-r|\geqslant k+l_n -r}\mathrm{cov}\big (X_r,X_s\big )\\
\nonumber &= \sum_{k\leqslant r < k+l_n} u_r(k+l_n-r) .
\end{align}
The non-stationarity can be bypassed if $u_r(\cdot)$ can be bounded by a stationary sequence $v:\mathbb{Z}^+\to\mathbb{R}$ such that
$u_r(\cdot) \leqslant v(\cdot)$. In this case, the same arguments show that for $ r<s$ 
\[
\mathrm{cov}\big (S_{[k+(r-1)l_n,k+rl_n)},\sum_{r<s\leqslant m_n} S_{[k+(s-1)l_n,k+sl_n)} \big )
\leqslant \sum_{j=1}^{l_n} v(j)
\]
and
\begin{equation}
\label{BoundCovariance}
\sum_{k\leqslant r\neq s <k+l_n} \mathrm{cov}\big (S_{[k+(r-1)l_n,k+rl_n)}, S_{[k+(s-1)l_n,k+sl_n)} \big )
\leqslant 2m_n\sum_{j=1}^{l_n} v(j).
\end{equation}
It follows that,
\begin{equation}\label{bound for variance of sum}
s_{m_n}^2 \leqslant \sigma^2_{[k,k+m_nl_n)}
\leqslant s_{m_n}^2 +2m_n\sum_{j=1}^{l_n} v(j).
\end{equation}
Note that $\displaystyle \sigma^2_{[k,k+m_nl_n)}=\mathrm{var}\big\{\sum_{j=1}^{m_n} S_{[k+(j-1)l_n,k+jl_n)}\big\}$.
Thus, we get ``nearly independence" $\sigma^2_{[k,k+m_nl_n)}
\approx s_{m_n}^2 $ provided the last term can be properly controlled. This leads to:
\medskip
\begin{lem-hand}\label{main hypotheses}	Let ${\pmb X}$ be a positive associated process satisfying :
\vspace*{2mm}

(a) there exists a constant $c>0$ such that $\displaystyle\mathrm{var}\{X_j\} > c$;
\vspace*{2mm}

(b) there exists a function $v:\mathbb{Z}^+\to\mathbb{R}$ such that
\begin{equation}\label{function v}
\sum_{n\geqslant 0} v(n)<\infty\quad\mbox{and}\quad u_j(n)\leqslant v(n),\quad \forall j\in \mathbb{Z}\,,\,\forall n\geqslant 0.
\end{equation}

\end{lem-hand}
\begin{remark}
For the stationary case condition (a) is a necessary assumption, or else, all the variables would be constants. A weaker condition
\[ 
\displaystyle\lim_{n \to \infty}\frac{1}{n}\sum_{j=k}^{k+n}\mathrm{var}\{X_j\} > c, \quad \forall
k\in \mathbb{Z}
\]
could have been assumed.  Also, condition (b) could have been replaced by 
: $\sum_{n\geqslant 0} u_j(n) <\infty$ uniformly on $j\in\mathbb{Z}$.
\end{remark}

\begin{lemma}
\label{few properties}
Assume that Hypothesis \ref{main hypotheses} holds. Then for $k_1 < k_2$,
\begin{equation}
\label{bounds variance}
\displaystyle (k_2-k_1) c \leqslant \sigma^2_{[k_1,k_2)} \leqslant (k_2-k_1) v(0) \quad \mbox{and} \quad m_nl_n c \leqslant s_{m_n}^2 \leqslant
m_nl_nv(0).
\end{equation}
Moreover, if $l_n\nto \infty$ and $\displaystyle \frac{n}{l_n} \nto \infty$ then
\begin{equation}
\label{convergence variance}
\displaystyle \frac{\sigma^2_{[k,k+n)}}{\sigma^2_{[k,k+m_nl_n)}}\nto 1 \quad \mbox{and} \quad 
\frac{\sigma^2_{[k,k+m_nl_n)}}{s_{m_n}^2}\nto 1.
\end{equation}
\end{lemma}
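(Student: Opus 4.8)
The plan is to prove the two-sided bounds \eqref{bounds variance} directly from Hypothesis \ref{main hypotheses}, and then deduce the two limits in \eqref{convergence variance} by showing that the ``error'' covariance terms are of lower order than the main variance terms, using the summability of $v$.

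First I would establish \eqref{bounds variance}. For the variance of a block, expand
\[
\sigma^2_{[k_1,k_2)} = \sum_{k_1\leqslant j < k_2}\mathrm{var}\{X_j\} + \sum_{k_1\leqslant r\neq s<k_2}\mathrm{cov}(X_r,X_s).
\]
The lower bound $(k_2-k_1)c$ is immediate since the off-diagonal covariances are nonnegative by positive association and each variance exceeds $c$ by (a). For the upper bound, bound each variance by $v(0)$ (taking $n=0$ in (b), since $u_j(0)=\sum_{i}\mathrm{cov}(X_j,X_i)\geqslant\mathrm{var}(X_j)$), and for the cross terms note that for each fixed $r$, $\sum_{s\neq r}\mathrm{cov}(X_r,X_s)\leqslant u_r(1)\leqslant v(1)$, or more crudely the full sum over $s$ is at most $u_r(0)-\mathrm{var}(X_r)\le v(0)-\mathrm{var}(X_r)$; adding the diagonal term back gives exactly $\sum_{k_1\le r<k_2}u_r(0)\le (k_2-k_1)v(0)$. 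The bounds for $s_{m_n}^2$ then follow by summing the block-wise bounds over the $m_n$ blocks, each of length $l_n$, giving $m_nl_nc\le s_{m_n}^2\le m_nl_nv(0)$.

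Next, for \eqref{convergence variance}. For the first ratio, write $\sigma^2_{[k,k+n)} = \sigma^2_{[k,k+m_nl_n)} + \mathrm{var}\{S_{[k+m_nl_n,k+n)}\} + 2\,\mathrm{cov}\big(S_{[k,k+m_nl_n)},S_{[k+m_nl_n,k+n)}\big)$. The leftover block has fewer than $l_n$ terms, so its variance is $O(l_n)$ by \eqref{bounds variance}, while the cross-covariance, by the same argument as in the display preceding Hypothesis \ref{main hypotheses}, is bounded by $\sum_{j=1}^{l_n}v(j)\le\sum_{j\ge0}v(j)<\infty$; hence the numerator differs from the denominator by $O(l_n)$. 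Since $\sigma^2_{[k,k+m_nl_n)}\ge m_nl_nc$ and $m_n=[n/l_n]\to\infty$, the ratio of the error to $\sigma^2_{[k,k+m_nl_n)}$ is $O(l_n/(m_nl_n)) = O(1/m_n)\to0$, giving the first limit. For the second ratio, use \eqref{bound for variance of sum}: $0\le \sigma^2_{[k,k+m_nl_n)}-s_{m_n}^2\le 2m_n\sum_{j=1}^{l_n}v(j)$, and the right-hand side is $o(m_nl_n)$ because $\tfrac1{l_n}\sum_{j=1}^{l_n}v(j)\to0$ (tail of a convergent series, Cesàro); dividing by $s_{m_n}^2\ge m_nl_nc$ yields the claim.

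The main obstacle, and the only place requiring care, is the second limit: one must extract the $o(l_n)$ rate from $\sum_{j=1}^{l_n}v(j)$ \emph{per block}. The crude bound $\sum_{j=1}^{l_n}v(j)\le\sum_{j\ge0}v(j)$ only gives that the error is $O(m_n)$, which is indeed $o(m_nl_n)$ once $l_n\to\infty$ — so in fact the bare summability of $v$ together with $l_n\to\infty$ suffices, and no refined Cesàro estimate is needed. I would present it that way to keep the argument self-contained. Everything else is bookkeeping with the nonnegativity of covariances and the elementary bounds \eqref{bounds variance}.
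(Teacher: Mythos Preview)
Your proof is correct and follows essentially the same route as the paper: the same expansion of $\sigma^2_{[k_1,k_2)}$ into diagonal and off-diagonal parts, the same block-wise summation for $s_{m_n}^2$, and for the second ratio in \eqref{convergence variance} the same appeal to \eqref{bound for variance of sum} together with $\frac{1}{l_n}\sum_{j=1}^{l_n}v(j)\to 0$.

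One point worth noting: for the first ratio in \eqref{convergence variance} the paper claims, ``by positive association,'' that $\sigma^2_{[k,k+n)}\leqslant\sigma^2_{[k,k+m_nl_n)}+\sigma^2_{[k+m_nl_n,k+n)}$, but association actually gives the reverse inequality (the cross-covariance is nonnegative). Your version---writing out the exact decomposition and bounding the cross term $2\,\mathrm{cov}(S_{[k,k+m_nl_n)},S_{[k+m_nl_n,k+n)})$ by $2\sum_{j\geqslant 1}v(j)=O(1)$ via the Cox--Grimmett argument---is the correct way to obtain the needed upper bound, and it still yields an $O(l_n)$ discrepancy, hence $O(1/m_n)$ after division by $\sigma^2_{[k,k+m_nl_n)}\geqslant m_nl_nc$. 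So your treatment is in fact more careful than the paper's at this step.
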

\begin{proof} (i) Note that, from the positivity of the covariances we have
\begin{align}
\nonumber
\displaystyle\sigma^2_{[k_1,k_2)}&=\sum_{k_1\leqslant r,s <k_2} \mathrm{cov}(X_r,X_s)\\
&\leqslant \sum_{r=k_1}^{k_2-1} \sum_{s\in \mathbb{Z} : | r-s| \geqslant 0}\mathrm{cov}(X_r,X_s)
\leqslant (k_2-k_1) v(0).
\nonumber
\end{align}
On the other hand, 
\begin{align}
\nonumber
\displaystyle\sigma^2_{[k_1,k_2)}&=\sum_{r=k_1}^{k_2-1}\mathrm{var}\{X_r\} +\sum_{k_1\leqslant r\neq s <k_2} \mathrm{cov}(X_r,X_s)\\
&\geqslant (k_2-k_1)c.
\nonumber
\end{align}

It follows that for $j=1,\cdots,m_n$ we have,
\[
l_nc\leqslant \sigma^2_{[k+(j-1)l_n, k+jl_n)} \leqslant l_nv(0)
\]
and
\[
m_nl_nc \leqslant s^2_{m_n} = \sum_{j=1}^{m_n}\sigma^2_{[k+(j-1)l_n, k+jl_n)}\leqslant m_nl_nv(0).
\]

(ii) The positive association also assures that 
\[
s^2_{m_n}\leqslant \sigma^2_{[k,k+m_nl_n)}\leqslant \sigma^2_{[k,k+n)}
\]
and
\[
\sigma^2_{[k,k+n)} \leqslant \sigma^2_{[k,k+m_nl_n)}+\sigma^2_{[k+m_nl_n,k+n)}.
\]
From \eqref{bounds variance} we have $\displaystyle\frac{\sigma^2_{[k+m_nl_n,k+n)}}{\sigma^2_{[k,k+m_nl_n)}}\leqslant\frac{(n-m_nl_n)v(0)}{m_nl_nc}$.
Since $\displaystyle \frac {n}{m_nl_n} \nto 1$ we get 
\[
\displaystyle 1\leqslant \frac{\sigma^2_{[k,k+n)}}{\sigma^2_{[k,k+m_nl_n)}}\leqslant 1+\frac{(n-m_nl_n)v(0)}{m_nl_nc}\nto 1.
\]
Similarly, from \eqref{bound for variance of sum} and \eqref{function v} we have
\[
\displaystyle 1\leqslant \frac{\sigma^2_{[k,k+m_nl_n)}}{s_{m_n}^2}\leqslant 1+ 2\frac{m_n\sum_{j=1}^{l_n}v(j)}{m_nl_nc}\nto 1.
\]
\end{proof}

To handle the weak convergence in the non-stationary setup we will make use of the Berry-Esseen
inequality (cf. Feller, vol II, \cite{Feller66} : if $\xi_1,\xi_2,\ldots$ are zero-mean and independent r.v.'s such that 
$\displaystyle\mathbb{E}\{|\xi_j|^3\} <+\infty$ for $j=1,2,\ldots$. Then
\begin{equation}\label{Berry-Esseen}
\sup_{x}\left |\mathbb{P}\left (\frac{\sum_{j=1}^{n}\xi_j}{\sqrt{\mathrm{var}\{\sum_{j=1}^{n}\xi_j\}}}\leqslant x\right )- \Phi(x)\right |\leqslant
6 {\sum_{j=1}^{n}\mathbb{E}(|\xi_j|^3)\over \left(\sum_{j=1}^{n}\mathrm{var}(\xi_j)\right)^{3/2} }.
\end{equation}
This will require a restrictier choice of the block size $l_n$,
\begin{equation}
\label{choice of ln}
l_n\nto\infty,\quad \frac{n}{l_n}\nto\infty\quad \mbox{and}\quad \frac{l_n^3}{m_n}\nto 0.
\end{equation}
Just take, for example, $l_n=n^\delta$ with $\delta < 1/4$.

\begin{theorem} \label{theorem for nonstationary}
Assume ${\pmb X}$ satisfies Hypothesis  \ref{main hypotheses} and that for some constant $C_*$ we have 
$\mathbb{E}\{|X_j|^3\}<C_*<+\infty$ for all $j\in\mathbb{Z}$. Then for $0< r \leqslant 2$ we have
\begin{equation}\label{convergence nonstationary}
\displaystyle d_r \big ( F_{[k,k+n)}, \Phi\big) \nto 0,\quad F_{[k,k+n)}\stackrel{d}{=}\frac{S_{[k,k+n)}-\mathbb{E}\big(S_{[k,k+n)}\big) }{\sigma_
{[k,k+n)}}\quad\mbox{and}\quad\Phi\stackrel{d}{=}\mathrm{N}(0,1).
\end{equation}
\end{theorem}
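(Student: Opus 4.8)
The plan is to mirror the proof of Theorem~\ref{convergencia em Mallows 2}. Once the weak convergence $F_{[k,k+n)}\ntod\Phi$ is in hand, everything else is automatic: since $V_{[k,k+n)}$ is normalized by its \emph{own} standard deviation $\sigma_{[k,k+n)}$, one has $\mathbb{E}(V_{[k,k+n)}^2)=\mathrm{var}(S_{[k,k+n)})/\sigma_{[k,k+n)}^2=1=\mathbb{E}(Z^2)$ \emph{exactly}, and $V_{[k,k+n)}\in\mathcal{L}_2$ trivially (finite sums of variables with finite third moments), so Theorem~\ref{BickelFriedmantheorem} gives $d_2(F_{[k,k+n)},\Phi)\nto 0$; the extension to $0<r<2$ then follows verbatim from the representation Theorem~\ref{teorema de representacao Mallows} and Liapounov's inequality, exactly as at the end of the proof of Theorem~\ref{convergencia em Mallows 2}. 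Hence the whole theorem reduces to a non-stationary CLT, which is where the block decomposition \eqref{blocks} and the choice \eqref{choice of ln} of $l_n$ enter.

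For the CLT I would write $S_{[k,k+n)}-\mathbb{E}(S_{[k,k+n)})=\sum_{j=1}^{m_n}\xi_j+R_n$, where $\xi_j=S_{[k+(j-1)l_n,k+jl_n)}-\mathbb{E}(\cdot)$ is the $j$-th centered block and $R_n=S_{[k+m_nl_n,k+n)}-\mathbb{E}(\cdot)$ the centered remainder. The remainder is asymptotically negligible: $\mathbb{E}\big(R_n^2/\sigma^2_{[k,k+n)}\big)=\sigma^2_{[k+m_nl_n,k+n)}/\sigma^2_{[k,k+n)}\leqslant(n-m_nl_n)v(0)/(nc)\nto 0$ by Lemma~\ref{few properties} and $n-m_nl_n<l_n=o(n)$, so $R_n/\sigma_{[k,k+n)}\nto 0$ in probability. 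Writing $\sum_j\xi_j/\sigma_{[k,k+n)}=\big(\sum_j\xi_j/s_{m_n}\big)\cdot\big(s_{m_n}/\sigma_{[k,k+n)}\big)$ and using $s_{m_n}/\sigma_{[k,k+n)}\nto 1$, a consequence of \eqref{convergence variance}, Slutsky's theorem reduces matters to proving $\sum_{j=1}^{m_n}\xi_j/s_{m_n}\ntod\mathrm{N}(0,1)$.

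To do this I would decouple the blocks and then invoke Berry--Esseen. By Lemma~\ref{lemma-associatedproperties}(a) the family $\{\xi_j\}$ is positively associated, so Lemma~\ref{lemma-associatedproperties}(b) evaluated at the common argument $t/s_{m_n}$, together with \eqref{BoundCovariance} and $s_{m_n}^2\geqslant m_nl_nc$ from Lemma~\ref{few properties}, gives
\[
\Big|\,\mathbb{E}\exp\Big\{\tfrac{it}{s_{m_n}}\textstyle\sum_{j=1}^{m_n}\xi_j\Big\}-\prod_{j=1}^{m_n}\mathbb{E}\exp\Big\{\tfrac{it}{s_{m_n}}\xi_j\Big\}\Big|\leqslant\frac{t^2}{2s_{m_n}^2}\,2m_n\sum_{j=1}^{l_n}v(j)\leqslant\frac{t^2}{c\,l_n}\sum_{j\geqslant 1}v(j)\nto 0
\]
for each fixed $t$, using \eqref{function v}. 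On the other hand, if $\tilde\xi_1,\dots,\tilde\xi_{m_n}$ are independent with $\tilde\xi_j\stackrel{d}{=}\xi_j$, then Minkowski's inequality and the uniform bound $\mathbb{E}|X_i|^3<C_*$ give $\mathbb{E}|\xi_j|^3\leqslant\big(\sum_i\|X_i-\mathbb{E}X_i\|_3\big)^3\leqslant(2C_*^{1/3}l_n)^3=:Cl_n^3$, so the Berry--Esseen bound \eqref{Berry-Esseen} for $\sum_j\tilde\xi_j/s_{m_n}$ is at most $6m_nCl_n^3/(m_nl_nc)^{3/2}=(6C/c^{3/2})(l_n^3/m_n)^{1/2}\nto 0$ by the last requirement in \eqref{choice of ln}. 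Hence $\prod_j\mathbb{E}\exp\{it\tilde\xi_j/s_{m_n}\}\nto e^{-t^2/2}$; combined with the display above, $\mathbb{E}\exp\{it\sum_j\xi_j/s_{m_n}\}\nto e^{-t^2/2}$, and L\'evy's continuity theorem gives $\sum_j\xi_j/s_{m_n}\ntod\mathrm{N}(0,1)$, completing the argument.

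The main obstacle is the Berry--Esseen step, specifically the crude bound $\mathbb{E}|\xi_j|^3=O(l_n^3)$: with only a uniform bound on the individual $\mathbb{E}|X_j|^3$ and no decay control at the level of third moments, Minkowski is essentially all one has, costing a factor $l_n^{3/2}$ per block instead of the $l_n^{1/2}$ one would get for truly independent summands. It is precisely to make the $m_n^{1/2}$ gain from having $m_n$ blocks outweigh this loss that the condition $l_n^3/m_n\nto 0$ is built into \eqref{choice of ln} (e.g. $l_n=n^\delta$ with $\delta<1/4$). A secondary point requiring care is the chain of variance comparisons from Lemma~\ref{few properties} used to pass between $\sigma_{[k,k+n)}$, $\sigma_{[k,k+m_nl_n)}$ and $s_{m_n}$ and to discard the remainder block, all of which lean on part~(a) of Hypothesis~\ref{main hypotheses} keeping the variances bounded away from zero.
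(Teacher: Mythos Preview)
Your proof is correct and uses the same core ingredients as the paper: the block decomposition \eqref{blocks} with the choice \eqref{choice of ln}, the characteristic-function decoupling via Lemma~\ref{lemma-associatedproperties}(b) together with \eqref{BoundCovariance}, the Berry--Esseen bound \eqref{Berry-Esseen} applied to independent copies of the blocks (with the crude Minkowski estimate $\mathbb{E}|\xi_j|^3=O(l_n^3)$), and the variance comparisons of Lemma~\ref{few properties}.

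The organization differs slightly. The paper first proves $d_2(F_{m_n},\Phi)\nto 0$ for the block sum $S_{[k,k+m_nl_n)}/s_{m_n}$, then invokes the representation Theorem~\ref{teorema de representacao Mallows} to produce a concrete $Z^*$ and transfers the $d_2$-convergence to $F_{[k,k+n)}$ by a Minkowski/coupling argument through the intermediates $A_n$ and $B_n$. You instead establish the CLT for $V_{[k,k+n)}$ directly via Slutsky (remainder negligible, ratio $s_{m_n}/\sigma_{[k,k+n)}\nto 1$), and then observe that $\mathbb{E}(V_{[k,k+n)}^2)=1$ \emph{exactly}, so Theorem~\ref{BickelFriedmantheorem} yields $d_2(F_{[k,k+n)},\Phi)\nto 0$ in one stroke. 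Your route is marginally more economical: it avoids the intermediate $d_2(F_{m_n},\Phi)$ step and the explicit coupling $Z^*$. The paper's route, on the other hand, isolates \eqref{cvg blocksum} as a result of independent interest and makes the role of the representation theorem more visible. Either way the substance is the same.
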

\begin{proof} (i) Without loss of generality we may assume $\mathbb{E}\{X_j\}=0$ for all $j$. If not, let $X'_j=X_j - \mathbb{E}\{X_j\}$ then the process
$\{X'_j:j\in\mathbb{Z}\}$ satisfies the same hypotheses. Consider the blocks \eqref{blocks} and assume that the block size $l_n$ satisfies \eqref{choice of ln}. We will show that
\begin{equation}\label{cvg blocksum}
\displaystyle d_2 \big ( F_{m_n}, \Phi\big) \nto 0\quad \mbox{with}\quad F_{m_n}\stackrel{d}{=}\frac{S_{[k,k+m_nl_n)}}{s_{m_n}}.
\end{equation}
Assuming \eqref{cvg blocksum} holds then, by Theorem \ref{teorema de representacao Mallows}, there exists $Z^*\stackrel{d}{=}\Phi$ such that
\[
\displaystyle d_2^2 \big ( F_{m_n}, \Phi\big) = \mathbb{E}\big\{\big(\frac{S_{[k,k+m_nl_n)}}{s_{m_n}}- Z^*\big)^2\big\}.
\]
From the definition of Mallows distance \eqref{Mallowsdistance} we have
\[
\displaystyle d_2^2 \big ( F_{[k,k+n)}, \Phi\big) \leqslant \mathbb{E}\big\{\big(\frac{S_{[k,k+n)}}{\sigma_{[k,k+n)}}- Z^*\big)^2\big\}.
\]
Using Minkowski's inequality we have  $d_2 \big ( F_{[k,k+n)}, \Phi\big) \nto 0$ provided
\begin{equation}\label{cvg secondmoment1}
\displaystyle 
A_n=\mathbb{E}\big\{\big(\frac{S_{[k,k+m_nl_n)}}{\sigma_{[k,k+m_nl_n)}}- \frac{S_{[k,k+m_nl_n)}}{s_{m_n}}\big)^2\big\}\nto 0 
\end{equation}
and
\begin{equation}\label{cvg secondmoment2}
\displaystyle
B_n=\mathbb{E}\big\{\big(\frac{S_{[k,k+n)}}{\sigma_{[k,k+n)}}- \frac{S_{[k,k+m_nl_n)}}{\sigma_{[k,k+m_nl_n)}}\big)^2\big\}\nto 0.
\end{equation}
As in the proof of Theorem \ref{convergencia em Mallows 2}, the Liapounov's inequality completes the proof for $0< r <2$.
\medskip

(ii) To show \eqref{cvg secondmoment1} note that $\displaystyle 
\mathbb{E}\big\{\big(\frac{S_{[k,k+m_nl_n)}}{\sigma_{[k,k+m_nl_n)}}\big)^2\big\}=1$. By \eqref{convergence variance} we have
\[
\displaystyle
A_n= \mathbb{E}\big\{\big(\frac{S_{[k,k+m_nl_n)}}{\sigma_{[k,k+m_nl_n)}}\big)^2\big( 1 - \frac{\sigma_{[k,k+m_nl_n)}}{s_{m_n}}\big)^2\big\}
= \big( 1 - \frac{\sigma_{[k,k+m_nl_n)}}{s_{m_n}}\big)^2 \nto 0.
\]
For  \eqref{cvg secondmoment2} write $S_{[k,k+m_nl_n)}=S_{[k,k+n)}-S_{[k+m_nl_n,k+n)}$. Same arguments as above shows that,
\[
\displaystyle
\mathbb{E}\big\{\big(\frac{S_{[k,k+n)}}{\sigma_{[k,k+n)}}-\frac{S_{[k,k+n)}}{\sigma_{[k,k+m_nl_n)}}\big)^2\big\}\nto 0.
\]
Since $\displaystyle\frac{n}{m_nl_n}\nto 1$ we have by \eqref{bounds variance}
\[
\displaystyle
\mathbb{E}\big\{\big(\frac{S_{[k+m_nl_n,k+n)}}{\sigma_{[k,k+m_nl_n)}}\big)^2\big\}\leqslant \frac{(n - m_nl_n)v(0)}{m_nl_n c}\nto 0.
\]
And $B_n\nto 0$.
\medskip

(iii) Since $\mathbb{E}\{|X_j|^3\}<C_*<+\infty$ the results from Lemma \ref{lemma-associatedproperties} can be applied. Taking
$\displaystyle r_1=\cdots=r_{m_n}=\frac{t}{s_{m_n}}$ we get
\[
\displaystyle 
\left|
\mathbb{E}\big (\exp\big \{i\frac{t}{s_{m_n}}\sum_{j=1}^{m_n} S_{[k+(j-1)l_n,k+jl_n)}\big \}\big )
-\prod_{j=1}^{m_n}\mathbb{E}\big (\exp\big \{i\frac{t}{s_{m_n}}S_{[k+(j-1)l_n,k+jl_n)}\big \}\big )
\right|
\leqslant
A(t,k,m_n)
\]
where
\begin{align}
\nonumber
\displaystyle A(t,k,m_n)&= \frac{t^2}{2s_{m_n}^2} \sum_{1 \leqslant r\neq s \leqslant m_n}\mathrm{cov}(S_{[k+(r-1)l_n,k+rl_n)},
S_{[k+(s-1)l_n,k+sl_n)})\\
\nonumber
&= \frac{t^2}{s_{m_n}^2}\sum_{r=1}^{m_n} \mathrm{cov}(S_{[k+(r-1)l_n,k+rl_n)},\sum _{r<s\leqslant m_n}S_{[k+(s-1)l_n,k+sl_n)})\\
\nonumber
&\leqslant \frac{t^2}{s_{m_n}^2} m_n \sum_{j=1}^{l_n}v(j) \leqslant \frac{t^2m_n}{m_nl_nc}\sum_{j=1}^{l_n}v(j) \nto 0.   
\end{align}
For the last inequalities we have used \eqref{BoundCovariance},  \eqref{function v} and \eqref{bounds variance}. 
Now let $Y_j \stackrel{d}{=}S_{[k+(j-1)l_n,k+jl_n)}$ for $j=1,\cdots,m_n$. Assume that $Y_1,\cdots,Y_{m_n}$ are independent r.v.'s. Then we can write.
\begin{equation}\label{characteristicfunction}
\displaystyle  
\left|\mathbb{E}\big (\exp\big\{i\frac{t}{s_{m_n}}\sum_{j=1}^{m_n} S_{[k+(j-1)l_n,k+jl_n)}\big\}\big )
-\prod_{j=1}^{m_n}\mathbb{E}\big (\exp\big\{i\frac{t}{s_{m_n}}Y_j\big\}\big)\right|
\nto 0.
\end{equation}
\medskip

(iv) By \eqref{characteristicfunction} we have for $Z\stackrel{d}{=}\Phi$,
\begin{equation}\label{cvg in distribution}
\displaystyle \frac{\sum_{j=1}^{m_n} Y_j}{s_{m_n}}\ntod Z 
\quad \Longrightarrow \quad
\frac{S_{[k,k+m_nl_n)}}{s_{m_n}}\ntod Z.
\end{equation}
And this will be accomplished using the Berrey-Esseen inequality \eqref{Berry-Esseen}. For every fixed $n$ we have by Minkowski's 
inequality
\[
\displaystyle\big [\mathbb{E}\big\{|S_{[k+(j-1)l_n,k+jl_n)}|^3\big\}\big]^{1/3}\leqslant
\sum_{r=1}^{l_n -1}\big [\mathbb{E}\big\{|X_r|^3\big\}\big]^{1/3} \leqslant l_nC_*^{1/3}.
\]
Thus, we have 
$\mathbb{E}\{|Y_j|^3\}=\mathbb{E}\{|S_{[k+(j-1)l_n,k+jl_n)}|^3\}\leqslant l_n^3C_*< +\infty$ 
for every fixed n.
Since the $Y_j's$ are independent we have $\displaystyle \mathrm{var} \{ \sum_{j=1}^{m_n}Y_j\}=s^2_{m_n}$. From \eqref{Berry-Esseen} we have
\[
\sup_{x}\left|\mathbb{P}\left(\frac{\sum_{j=1}^{m_n}Y_j}{s_{m_n}}\leqslant x\right)- \Phi(x)\right|\leqslant
6 \frac{\sum_{j=1}^{m_n}\mathbb{E}(|Y_j|^3)}{s_{m_n}^3}.
\]
By \eqref{bounds variance} we have $s_{m_n}^3 \geqslant (m_nl_nc)^{3/2}$ and
\[
\sup_{x}\left|\mathbb{P}\left(\frac{\sum_{j=1}^{m_n}(Y_j)}{s_{m_n}}\leqslant x\right)- \Phi(x)\right|\leqslant
6 \frac{m_nl_n^3C_*}{(m_nl_n c)^{3/2}} \nto 0.
\]
In the last limit we have used the fact that  $\displaystyle\frac{l_n^3}{m_n}\nto 0$.
\medskip

(v) To complete the proof of \eqref{cvg blocksum} we make use of Theorem \ref{BickelFriedmantheorem}. Note that $\displaystyle \frac{
S_{[k,k+m_nl_n)}}{s_{m_n}} \in \mathcal{L}_2$ and by \eqref{convergence variance} we have
\[
\displaystyle \mathbb{E}\big\{\big ( \frac{
S_{[k,k+m_nl_n)}}{s_{m_n}} \big)^2    \big\}=\frac {\sigma^2_{k,k+m_nl_n)}}{s_{m_n}^2}\nto 1 =\mathbb{E}\{Z^2\}.
\] 
From  \eqref{cvg in distribution} we also have $\displaystyle\frac{S_{[k,k+m_nl_n)}}{s_{m_n}}\ntod Z$. 
And  \eqref{cvg blocksum} follows.
\end{proof}

\begin{corollary}\label{cor-con-Kol-dis}
Assume $\mathbf{X}$ satisfies Hypothesis 1 and that for 
some constant $C_*$ we have $\mathbb{E}\{|X_j|^3\}<C_*<+\infty$ for all $j\in\mathbb{Z}$.
Then we have
\[
\lim_{n\to\infty} d_{K}(F_{[k,k+n)},\Phi) = 0,
\]
where $d_{K}(F,G)=\sup_{x\in\mathbb{R}}|F(x)-G(x)|$ is the Kolmogorov distance between $F$ and $G$.
\end{corollary}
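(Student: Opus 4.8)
The plan is to deduce the Kolmogorov convergence directly from the Mallows convergence already established in Theorem \ref{theorem for nonstationary}, invoking the classical fact (P\'olya's theorem) that weak convergence to a \emph{continuous} limit distribution is automatically uniform.

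First, I would observe that the hypotheses of Corollary \ref{cor-con-Kol-dis} are exactly those of Theorem \ref{theorem for nonstationary}; taking $r=2$ there gives, for each fixed $k\in\mathbb{Z}$, $d_2(F_{[k,k+n)},\Phi)\nto 0$. By Theorem \ref{BickelFriedmantheorem} (equivalently by \eqref{BickelFriedman}) this implies in particular $F_{[k,k+n)}\ntod\Phi$, i.e. $F_{[k,k+n)}(x)\to\Phi(x)$ at \emph{every} $x\in\mathbb{R}$, since $\Phi$ has no discontinuity points.

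Second, I would upgrade this pointwise convergence to uniform convergence. The limit d.f. $\Phi$ is continuous and nondecreasing with $\Phi(-\infty)=0$, $\Phi(+\infty)=1$; the standard $\varepsilon$-net argument (choose a finite partition $-\infty=x_0<x_1<\cdots<x_M=+\infty$ on which $\Phi$ oscillates by less than $\varepsilon$, then estimate $|F_{[k,k+n)}(x)-\Phi(x)|$ on each subinterval using the monotonicity of both $F_{[k,k+n)}$ and $\Phi$ together with the pointwise convergence at the $x_i$'s) yields $\sup_{x\in\mathbb{R}}|F_{[k,k+n)}(x)-\Phi(x)|\nto 0$, which is precisely $d_K(F_{[k,k+n)},\Phi)\nto 0$.

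I do not expect a genuine obstacle here: the only ingredient beyond the results already proved is P\'olya's uniform-convergence theorem, whose proof is the elementary net argument just sketched. As an alternative one could bypass weak convergence altogether and estimate $d_K$ directly along the lines of step (iv) in the proof of Theorem \ref{theorem for nonstationary}: the Berry--Esseen bound \eqref{Berry-Esseen} already gives an explicit rate for $\sup_x|\mathbb{P}(\sum_{j\leqslant m_n}Y_j/s_{m_n}\leqslant x)-\Phi(x)|$, and the passage from $\sum_{j\leqslant m_n}Y_j/s_{m_n}$ to $S_{[k,k+n)}/\sigma_{[k,k+n)}$ can be controlled in probability via the $L^2$ estimates $A_n\nto 0$, $B_n\nto 0$ of that proof combined with the uniform continuity of $\Phi$; but the P\'olya route is shorter and needs nothing new.
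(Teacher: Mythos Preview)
Your argument is correct: Theorem \ref{theorem for nonstationary} yields $d_2(F_{[k,k+n)},\Phi)\nto 0$, hence $F_{[k,k+n)}\ntod\Phi$ by Theorem \ref{BickelFriedmantheorem}, and P\'olya's theorem then upgrades this to uniform convergence since $\Phi$ is continuous. The $\varepsilon$-net argument you sketch is exactly the standard proof of P\'olya's theorem, and nothing more is needed.

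The paper, however, takes a different route. Rather than passing through weak convergence and P\'olya's theorem, it invokes the quantitative inequality
\[
d_K(F_{[k,k+n)},\Phi)\leqslant 2\sqrt{C\, d_1(F_{[k,k+n)},\Phi)},\qquad C=\tfrac{1}{\sqrt{2\pi}},
\]
which follows from Monge--Kantorovich duality together with the fact that the standard normal density is bounded by $C$; the conclusion then follows directly from Theorem \ref{theorem for nonstationary} with $r=1$. Your approach is more elementary in that it avoids the duality machinery and uses only the soft fact that weak convergence to a continuous limit is uniform. The paper's approach, on the other hand, is quantitative: the displayed inequality would convert any rate of convergence in $d_1$ into an explicit rate in $d_K$, which P\'olya's theorem cannot do. In the present setting no rates are extracted, so either proof suffices; but the paper's inequality is the sharper tool if one later wants to quantify the convergence (cf.\ the discussion of \cite{Bu96} in the concluding remarks).
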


\begin{proof}
Since a standard normal random variable has probability density bounded by $C=1/\sqrt{2\pi }$ 
it follows from Monge-Kantorovich duality that 
\[
d_{K}(F_{[k,k+n)},\Phi)
\leq 
2\sqrt{C d_{1}(F_{[k,k+n)},\Phi) }.
\]
Therefore the convergence 
follows from Theorem \ref{theorem for nonstationary}, with $r=1$. 
\end{proof}

\section{Applications to Gibbsian Dependent Ensembles}
We will be considering processes ${\pmb X}\equiv\{X_j:j\in\mathbb{Z}\}$ defined on $\Omega = E^{\mathbb{Z}}$
where $E\subset \mathbb{R}$ is a measurable subset. Let $\mathcal{B}(E)$ denote the Borel subsets and let $\lambda$ be a probability
measure on $(E,\mathcal{B}(E))$. On the product space $\Omega$ let $\mathscr{F}$ denote the usual $\sigma$-field. Assume that the variables $X_n$ are projections, that is,
for $\omega=(\cdots,\omega_{-1},\omega_0,\omega_1,\cdots)\in\Omega$ we have $X_n=\omega_n$. The probabilities of the ensembles 
$\omega$ will be derived from a given specifications 
${\pmb \gamma}=\{\gamma_{\Lambda}(A|\omega) : A\in \mathscr{F},\omega\in\Omega,\Lambda \subset
\mathbb{Z}, \Lambda\;\; \mbox{finite}\}$ 
formed by a suitalbe family of probability kernels. 
The kernels $\{\gamma_{\Lambda}(\cdot|\cdot)\}_{\Lambda \subset
\mathbb{Z}}$ are candidates for conditional expectations.  Define 
$\mathscr{F}_{\Lambda}\equiv \sigma(X_i: i\in\Lambda)$ and  similarly $\mathscr{F}_{\mathbb{Z}\setminus\Lambda}$. Gibbs measures $\mathscr{G}({\pmb \gamma})$ are defined to be all the probability measures $\mu$  on $(\Omega,\mathscr{F})$ for which $\mathbb{E}_{\mu}(\mathds{1}_{A}|\mathscr{F}_{\mathbb{Z}\setminus\Lambda})(\omega)=\gamma_{\Lambda}(A|\omega)\,,\mu\mbox{-a.s.}$.
\medskip
   
In the examples that follow the specification ${\pmb \gamma}$  
will be given by exponentially decaying probabilities generated by a 
prescribed Hamiltonian $H$. 
Let $\pmb{J}= \{J_{ij}\geqslant 0: i,j\in\mathbb{Z}\ \}$ be a collection 
of real numbers such that  
\[
J_{ii}=0 \quad \mbox{and} \quad \sup_{i\in\mathbb{Z}} \sum_{j\in\mathbb{Z} } J_{ij} <+\infty.
\]
For each finite  $\Lambda\subset \mathbb{Z}$ and $\omega\in\Omega$ define
\[
H_{\Lambda}(\omega) 
= 
\sum_{i,j\in\Lambda} J_{ij}\ \omega_i\omega_j 
+
\sum_{i\in\Lambda, j\in\mathbb{Z}\setminus\Lambda} J_{ij}\ \omega_i\omega_j. 
\]
\medskip

Under the above setting, if for all finite $\Lambda\subset \mathbb{Z}$ and $\omega\in\Omega$ we have
\[
		Z_{\Lambda}(\omega)
		= 
		\int_{ E^{|\Lambda|} }
		\mathds{1}_{\{\sigma_j=\omega_j: \forall j\in\mathbb{Z}\setminus\Lambda \}}(\sigma)
		\exp(H_{\Lambda}(\sigma))
		\prod _{i\in\Lambda} d\lambda(\sigma_i)<+\infty.
	\]
Then for all $A \in \mathscr{F}$, $\Lambda\subset \mathbb{Z}$ finite  and $\omega\in \Omega$, 
\[
		\gamma_{\Lambda}(A,\omega) 
		= 
		\frac{1}{Z_{\Lambda}(\omega)}
		\int_{ E^{|\Lambda|} }
		\mathds{1}_{A}(\sigma)
		\mathds{1}_{\{\sigma_j=\omega_j: \forall j\in\mathbb{Z}\setminus\Lambda \}}(\sigma)
		\
		\exp(H_{\Lambda}(\sigma))
		\
		\prod _{i\in\Lambda} d\lambda(\sigma_i)
	\]
define a specification. 
For further details on this matter see Georgii \cite{georgii}.
\medskip

We are interested on the pair $\lambda$ and $\pmb{J}$ 
that ensure the existence of at least one Gibbs measure 
$\mu\in\mathscr{G}({\pmb \gamma})$. And  such that   
the CLT holds for  
${\pmb X}$ on the probability space $(\Omega,\mathscr{F},\mu)$. More specifically,
for $S_{[k,k+n)}=\sum_{i=k}^{k+n-1}X_i$
\begin{equation}
\label{CLT mu}
\left|
\mu\left(
{
S_{[k,k+n)} - \mathbb{E}_{\mu}(S_{[k,k+n)})
\over 
\sqrt{\mathrm{var}_{\mu}(S_{[k,k+n)}) }
} 
\leqslant x
\right) 
- 
\Phi(x)
\right|
\nto
0.
\end{equation} 
And a stronger result, namely, the convergence in the Mallows distance  
\begin{equation}
\label{Mallows mu}
\lim_{n\to\infty} d_r(F^{\mu}_{S_{[k,k+n)}},\Phi)=0\quad\mbox{with}\,\,F^{\mu}_{S_{[k,k+n)}}\stackrel{\mathrm{d}}{=}{ S_{[k,k+n)} - \mathbb{E}_{\mu}(S_{[k,k+n)})
\over 
\sqrt{\mathrm{var}_{\mu}(S_{[k,k+n)}) }}.
\end{equation}

\begin{example}
Suppose that $E=\mathbb{R}$ and let $\lambda$ be a non-degenerated probability measure on $(\mathbb{R},\mathcal{B}(\mathbb{R}))$ such that $\displaystyle\int_{\mathbb{R}}x^2 d\lambda (x) <+\infty$. Assume that  
$J_{ij}= 0$ for all $i$ and $j\in\mathbb{Z}$. Then $Z_{\Lambda}(\omega)=1$ and $\gamma_{\Lambda}(\cdot,\cdot)$ are well-defined.  
The set 
of the Gibbs measures $\mathscr{G}({\pmb \gamma})$
is a singleton and its unique probability measure $\mu$ is the 
product measure $\mu
=
\prod_{i\in\mathbb{Z}} \lambda_i$ where $\lambda_i = \lambda$, $\forall i\in\mathbb{Z}$.
This is easily verified by noting that for $B\in\mathscr{F}_{\mathbb{Z}\setminus\Lambda}$ and $A\in\mathscr{F}$ we have  
\begin{eqnarray}
\int_B \mathds{1}_{A}(\omega) d\mu (\omega)
&=&
\int_B
\Big[ \int_{\mathbb{R}^{|\Lambda|}}
\mathds{1}_{A}(\sigma)
\mathds{1}_{\{\sigma_j=\omega_j : j\in \mathbb{Z}\setminus\Lambda\}}(\sigma)   
\prod_{i\in\Lambda}d\lambda_i(\sigma_i)\Big]
\prod_{j\in\mathbb{Z}\setminus\Lambda} d\lambda_j(\omega_j)
\nonumber\\
\vspace*{2mm}
&=&
\int_B \gamma_\Lambda (A,\omega)\prod_{j\in\mathbb{Z}\setminus\Lambda} d\lambda_j(\omega_j)=\int_B \gamma_\Lambda (A,\omega)
d\mu(\omega).
\nonumber
\end{eqnarray}
Thus $\mathbb{E}_{\mu}(\mathds{1}_{A}|\mathscr{F}_{\mathbb{Z}\setminus\Lambda})(\omega)=\gamma_{\Lambda}(A|\omega)$. It follows that  we have
a sequence 
of i.i.d r.v.'s with $X_i\stackrel{\mathrm{d}}{=}\lambda$. Since $\lambda$ is non-degenerated we have $\mathrm{var} \{X_i\} > 0$. Clearly the hypotheses of Theorem \ref{convergencia em Mallows 2} are satisfied and the desired convergences 
\eqref{CLT mu} and \eqref{Mallows mu} follow for $r\leq 2$. Moreover, if for some $r^* > 2$ we have $\displaystyle\int_{\mathbb{R}}|x|^{r^*} d\lambda (x) <+\infty$
then by Theorem \ref{teo-conv-dist-mallows-alpha-grande} we also have \eqref{Mallows mu} for $2< r <r^*$.\\
\end{example}
\begin{example}\label{exemplo-ising-curto-alcance}
Let $E=[-1,1]$ and let $\lambda$ be the normalized Lebesgue measure
on $E$. For fixed $L> 0$ define $\pmb J$ by : 
\[
J_{ij}=J\mathds{1}_{\{0<|i-j|\leqslant L\}},\quad i,j\in\mathbb{Z}
\]
where $J>0$ is a constant.
In this case, it is well-known that 
set of the Gibbs measures $\mathscr{G}({\pmb \gamma})=\{\mu\}$
is a singleton. 
A straightforward application of the GKS-II inequality 
shows that $\pmb{X}$ on $(\Omega,\mathscr{F},\mu)$ is not a sequence of
independent r.v.'s. Making use of the FKG inequality one can verify that $\pmb{X}$ on $(\Omega,\mathscr{F},\mu)$ is stationary and positive 
associated. The Lieb-Simon inequality (cf. \cite{Lieb80} and \cite{Simon80}) shows that 
the susceptibility $\chi(\mu)<+\infty$. Thus \eqref{variance} holds and we have from Theorem \ref{convergencia em Mallows 2} the convergences 
\eqref{CLT mu} and \eqref{Mallows mu} for $0< r\leqslant 2$. Moreover, the Lieb-Simon inequality also assures that 
for any $i\in\mathbb{Z}$ we have 
$\mathrm{cov}_{\mu}(X_0,X_i)\leq C e^{-m|i|}$, where
$C$ and $m$ are positive constants. It follows that the hypotheses of Theorem  \ref{teo-conv-dist-mallows-alpha-grande}
are verified and the convergence \eqref{Mallows mu} also holds for $r\geqslant 2$.\\
\end{example}

\begin{example}\label{exemplo-ising-longo-alcance}
Let $E=\{-1,1\}$, $\lambda$ the normalized counting measure
on $E$. For all $i\in\mathbb{Z}$ we define 
$J_{ii}=0$ and 
\[
J_{ij} = \beta |i-j|^{-\alpha}, \quad i,j\in\mathbb{Z}\ \text{and}\ i\neq j
\]
where $\beta>0$ and $\alpha>1$.

In this example the discussion is much more subtle.
We have to split the analysis in 
terms of the parameter $\alpha$ in two cases\footnote{For $\alpha<1$ the collection $\pmb J$ 
is not even $\lambda$-admissible so this case is 
in some sense is considered trivial.}. 
The first one (and more complex) is $1<\alpha\leqslant 2$ and the 
second is $\alpha>2$. 
To make it explicit the dependence on the parameters $\beta$ and 
$\alpha$, we write $\mathscr{G}({\pmb \gamma}_{\beta,\alpha})$ instead of $\mathscr{G}({\pmb \gamma})$.

Suppose that $1<\alpha\leqslant 2$. 
In this case there is a real number $\beta_c(\alpha)$, 
called critical point satisfying 
$0<\beta_c(\alpha)<+\infty$, such that 
the set of the Gibbs measures $\mathscr{G}({\pmb \gamma}_{\beta,\alpha})$ 
has infinitely many elements for all $\beta>\beta_c(\alpha)$
(supercritical phase)
and for all $\beta<\beta_c(\alpha)$ is a singleton (subcritical phase) 
\cite{FD69, FS82}. 
In the subcritical phase, the unique probability measure $\mu_{\beta,\alpha}$ compatible with 
the specification $\pmb\gamma_{\beta,\alpha}$ has the FKG property and the 
stochastic process ${\pmb X}= \{X_j:j\in\mathbb{Z}\}$ on 
$(\Omega,\mathscr{F},\mu_{\beta,\alpha})$ is associated
and stationary. Aizenman and Newman \cite{AN86} obtained polynomial
decay for $\mathrm{cov}_{\mu_{\beta,\alpha}}(X_0,X_i)$, up to the critical point , i.e., 
the existence of some positive constant $C(\beta,\alpha)$
so that for all $\beta<\beta_c(\alpha)$ 
we have 
$\mathrm{cov}_{\mu_{\beta,\alpha}}(X_0,X_i)\leq C(\beta,\alpha) |i|^{-\alpha}$
and therefore (since $\alpha>1$) 
the susceptibility $\chi(\mu_{\beta,\alpha})<+\infty$ and 
the Cox-Grimmett coefficient satisfies 
$u_{{\pmb X}}(n) = O( n^{1-\alpha})$. 
In this case the convergence \eqref{Mallows mu} holds for
$r=2$ or $r>2$ and $r^2+(\delta-2)r<2\delta\alpha$ for some $\delta>0$ (Lemma \ref{lemma Birkel}).

For $1<\alpha\leqslant 2$ and $\beta_c(\alpha)<\beta$
the analysis is much harder. For example, we can not
ensure that the stochastic process ${\pmb X}$ on 
$(\Omega,\mathscr{F},\mu)$ is stationary
for any $\mu\in\mathscr{G}({\pmb \gamma}_{\beta,\alpha})$. Moreover
the susceptibility is not finite anymore.

The case $\alpha>2$ is similar to the case $1<\alpha\leq 2$
and $\beta<\beta_{c}(\alpha)$, but no restriction on the parameter $\beta$
is need to ensure the uniqueness of the Gibbs measures and the other
used properties.

\end{example}

On each of the three previous examples, the stochastic process   
${\pmb X}$ on $(\Omega,\mathscr{F},\mu)$ is 
stationary. We now present a new example where 
the stationarity hypothesis is broken and the more 
general results of the Section \ref{mallows-not-stationary}
is required to ensure the convergence \eqref{Mallows mu}. 
We remark that for the next example the CLT theorem obtained
by Newman in \cite{CN80} can not used.

\begin{example}\label{exemplo-ising-longo-alcance-nao-invariante}
We take $E=\{-1,1\}$, $\lambda$ the normalized counting measure
on $E$. For all $i\in\mathbb{Z}$ we define 
$J_{ii}=0$ and \[
J_{ij} = |i-j|^{-\alpha}+r_{ij}, \quad i,j\in\mathbb{Z}\ \text{and}\ i\neq j
\]
where $\alpha> 2$ and  $r_{ij}$ is arbitrarily chosen, but satisfying 
for some positive constants $C_1<1$ and $C_2>1$
the following inequalities  
$C_1|i-j|^{-\alpha}\leqslant r_{ij}\leqslant C_2|i-j|^{-\alpha}$.
The family $\pmb J$ is $\lambda$-admissible and the set of the Gibbs measures
$\mathscr{G}({\pmb \gamma})$ still is a singleton. This unique Gibbs measure
$\mu$ has the FKG property and ${\pmb X}$ on $(\Omega,\mathscr{F},\mu)$ 
is not stationary, in general. 

Note that the Hipothesis \ref{main hypotheses}
of the Section \ref{mallows-not-stationary} follows from the 
GKS-II \cite{Griffiths67,KellySherman69} and 
Simon-Lieb \cite{Lieb80} inequalities. 
Since the coordinates of $\pmb{X}$ are uniformly bounded, 
we can apply Theorem \ref{theorem for nonstationary}
to obtain the convergence \eqref{Mallows mu}.
\end{example}

\section{Concluding Remarks}

In \cite{Bu96} there are results similar to Theorem \ref{theorem for nonstationary}
and Corollary \ref{cor-con-Kol-dis}. Although in \cite{Bu96} 
the processes can be indexed in $\mathbb{Z}^d$ the sequence is required to 
have finite $(3+\varepsilon)$ moment, while here only the third moment is required.
In \cite{Bu96} the asymptotic normality of stabilized partial sums 
are proved in the Kolmogorov distance. This result, 
for one-dimensional case, is strengthened (Corollary \ref{cor-con-Kol-dis} )
by proving the convergence in the Mallows distance of order one.   

The results of Section \ref{The Mallows Distance} can be easily 
generalized to multidimensional indexed processes 
since their basic ideas are based on \cite{CG84} and their 
results are valid for multidimensional indexed processes.

\section*{Acknowledgements}
The authors are partially supported by CNPq.

\bibliographystyle{alpha}


\end{document}